\documentclass[reqno]{amsart}

\oddsidemargin 0cm \evensidemargin 0cm \topmargin -0.5cm
\textwidth 15cm \textheight 21cm

\usepackage{times}
\usepackage{mathrsfs}
\usepackage{amsmath}
\usepackage{amsthm}
\usepackage{amsfonts}

\newtheorem{thm}{Theorem}[section]

\newtheorem{cor}[thm]{Corollary}
\newtheorem{prop}[thm]{Proposition}

\newtheorem{rem}[thm]{Remark}

\newtheorem{deff}[thm]{Definition}

\numberwithin{equation}{section}

\newcommand{\Wqbb}{\mathbb{W}_{p,\mathcal{B}}}
\newcommand{\Wqb}{W_{p,\mathcal{B}}}
\newcommand{\R}{\mathbb{R}}

\newcommand{\E}{\mathbb{E}}

\newcommand{\Om}{\Omega}

\newcommand{\rd}{\mathrm{d}}
\newcommand{\divv}{\mathrm{div}}

\newcommand{\h}{h}
\newcommand{\g}{g}
\newcommand{\bqn}{\begin{equation}}
\newcommand{\eqn}{\end{equation}}
\newcommand{\bqnn}{\begin{equation*}}
\newcommand{\eqnn}{\end{equation*}}
\newcommand{\bear}{\begin{eqnarray}} 
\newcommand{\eear}{\end{eqnarray}} 
\newcommand{\bean}{\begin{eqnarray*}} 
\newcommand{\eean}{\end{eqnarray*}} 
\newcommand{\bs}{\begin{split}}
\newcommand{\es}{\end{split}}

\title[Age-Dependent Equations with Non-Linear Diffusion]
{Age-Dependent Equations with Non-Linear Diffusion}

\author[Ch. Walker]{Christoph Walker}

\address{Leibniz Universit\"at Hannover, Institut f\"ur Angewandte Mathematik, Welfengarten 1, D--30167 Hannover, Germany.}
\email{walker@ifam.uni-hannover.de}

\begin{document}

\begin{abstract}
We consider the well-posedness of models involving age structure and non-linear diffusion. Such problems arise in the study of population dynamics. It is shown how diffusion and age boundary conditions can be treated that depend non-linearly and possibly non-locally on the density itself. The abstract approach is depicted with examples.
\end{abstract}

\keywords{Age structure, non-linear diffusion, population models, evolution systems.
\\
{\it Mathematics Subject Classifications (2000)}: 35M10, 35K90, 92D25.}

\maketitle

\section{Introduction}

\noindent We consider abstract non-linear problems that naturally arise in the study of the dynamics of populations structured by age and spatial position (e.g. see \cite{WebbSpringer} and the references therein). More precisely, we are interested in Banach-space-valued solutions to equations of the form
\begin{align}
\partial_t u\,+\, \partial_au &\, =\,     -A[\bar{u}](t)\,u\,-\,m\big(t,a,\bar{u}(t)\big)\, u \ ,\!\!\!\!\!\!\!\!\!\!\!\!\!\!\!\!\!\!\!\!\!\!\!& t>0\ ,\quad a>0\ ,\label{1}\\ 
u(t,0)&\, =\, B[u](t)\ ,& t>0\ ,\label{2}\\
u(0,a)&\, =\,  u^0(a)\ ,& a>0\ ,\label{3}\\
\bar{u}(t)&\, =\, \int_0^\infty u(t,a)\, \h(a)\,\rd a\ ,& t>0\ .\label{4}
\end{align}
The function $u=u(t,a)$ usually represents the population density of a certain specie at time $t>0$ and age $a>0$, so that $\bar{u}(t)$ in equation \eqref{4} is the (weighted) total population independent of age. The operator $A[\bar{u}](t)$ in equation \eqref{1} acts for a fixed function $\bar{u}$ and time $t$ as a linear (and unbounded) operator on a Banach space $E_0$. In concrete applications, $A[\bar{u}](t)$ plays the role of non-linear diffusion. Equation \eqref{2} reflects the age-boundary conditions depending on the biological context.

The main features of equations \eqref{1}-\eqref{4} are the non-linear dependence of the operators $A$ and $B$ on the (total) density $u$. While a great part of the research so far focused on linear diffusion, it is the aim of this paper to present an approach in an abstract setting giving a framework for a larger class of problems of the form \eqref{1}-\eqref{4}. This will not only provide us with some flexibility in choosing the underlying functional spaces in concrete applications, but also allows us to consider non-linear diffusion and age-boundary conditions that may depend locally or possibly non-locally with respect to time on the density $u$.
The approach applies to general second order time-dependent elliptic operators on a smooth domain $\Om\subset\R^n$, e.g. to operators of the form
$$
A[\bar{u}](t)w\,=\,-\nabla_x\cdot\big(D\big(\Phi(\bar{u})(t)\big)\nabla_x w\big)
$$
for some smooth function $D$ with $D(z)\ge d_0>0$, $z\in \R$, subject to suitable boundary conditions on $\partial\Om$. Here, the function $\Phi$ is a suitable function merely depending on $\bar{u}([0,t])$, in particular, $\Phi(\bar{u})(t)=\bar{u}(t)$ is possible. A reasonable choice is then $E_0=L_p(\Om)$ with $p\in [1,\infty)$. As for the non-linear age-boundary condition \eqref{2}, the operator $B$ may also depend locally or non-locally on the density $u$. For instance, we may incorporate birth boundary conditions of the form
$$
B[u](t) =\int_0^\infty b\big(t,a,\bar{u}(t)\big)\,u(t,a)\,\rd a
$$
with some suitable birth modulus $b$ (e.g., see \cite{WebbSpringer}), or also age boundary conditions with history-dependent birth function of the form
$$
B[u](t) =\int_0^\infty  b\left(t,a,\int_{-\tau}^0 \bar{u}(t+\sigma)\rd \sigma\right)\,  u(t,a)\ \rd a
$$
as contemplated in \cite{DiBlasio}, where $\tau>0$ is the maximal delay. We refer to our examples in Section \ref{examples}.\\

In the next section, Section \ref{main result}, we first list our assumptions and introduce the notion of a (generalized) solution to \eqref{1}-\eqref{4} before stating our main results on the well-posedness of \eqref{1}-\eqref{4}. This section is then supplemented with further properties of the solution such as regularity, positivity, and global existence. The proof of the main result, Theorem \ref{T}, will be performed in Section \ref{proof of theorem T}, while the proofs of the additional properties will be given in Section \ref{further proofs}. Finally, in Section \ref{examples} we briefly indicate how to apply these results in problems occurring in different situations of population dynamics. 

We shall point out that other notions of solutions and other solution methods for age structured equations with linear diffusion were also introduced in literature, e.g. using integrated semigroups (see \cite{MagalShigui,MagalThieme} and the references therein) or using perturbation arguments (see \cite{NickelRhandi,Rhandi,RhandiSchnaubelt}). For a similar approach as in the present paper we refer to \cite{KunischSchappacherWebb,LaurencotWalkerOpus1,WalkerDIE,WalkerEJAM,WebbSpringer}. We also refer to \cite{BusenbergIannelli1,BusenbergIannelli2,BusenbergIannelli3,LaurencotWalkerOpus2} for other approaches to age structured equations with non-linear diffusion.

\section{Main Results}\label{main result}

In the following, we assume that $E_1$ and $E_0$ are Banach spaces such that $E_1$ is densely and continuously embedded in $E_0$. Furthermore, $(\cdot,\cdot)_\theta$ is for each $\theta\in (0,1)$ an admissible interpolation functor, that is, $E_1$ is densely embedded in each $E_\theta:=(E_0,E_1)_\theta$. Let $\mathcal{L}(E_1,E_0)$ denote the space of all bounded and linear operators from $E_1$ into $E_0$ equipped with the usual uniform operator norm. Given $\omega>0$ and $\kappa\ge 1$ we write $$A\in\mathcal{H}(E_1,E_0;\kappa,\omega)$$ provided $A\in\mathcal{L}(E_1,E_0)$ is such that $\omega+A$ is an isomorphism from $E_1$ onto $E_0$ and satisfies 
$$
\frac{1}{\kappa}\,\le\,\frac{\|(\lambda+A)u\|_{\mathcal{L}(E_1,E_0)}}{\vert\lambda\vert \,\| u\|_{E_0}+\|u\|_{E_1}}\,\le \, \kappa\ ,\quad Re\, \lambda\ge \omega\ ,\quad u\in E_1\setminus\{0\}\ .
$$
We set
$$
\mathcal{H}(E_1,E_0):=\bigcup_{ \substack{\kappa\ge 1 \\ \omega>0}} \mathcal{H}(E_1,E_0;\kappa,\omega)\ ,
$$
which (equipped with the topology induced by the uniform operator norm) is an open subset of $\mathcal{L}(E_1,E_0)$. It is well known that $A\in \mathcal{H}(E_1,E_0)$ if and only if $-A$, considered as a linear operator in $E_0$ with domain $E_1$, is the generator of a strongly continuous analytic semigroup on $E_0$, e.g. see \cite{LQPP}.

Next, we fix a function $\g\in L_{\infty,loc}^+(\R^+)$ satisfying
\bqn\label{a1111}
0\,<\,g_0\,\le\, \g (a+b)\,\le\, g_1\,\g (a)\,\g (b)\ ,\quad a,b>0\ ,
\eqn
for some numbers $g_j>0$, and we introduce the Banach space $$
\mathbb{E}_\theta:=L_1\big(\R^+,E_\theta,\g(a)\rd a\big)\ .
$$ 
If $T>0$, we put $I_T:=[0,T]$.
Given a function $u\in\mathbb{E}_0^{I_T}$, we simply write $u(t,a)$ for $t\in I_T$ and $a> 0$ instead of $u(t)(a)$. For an interval $J$ we set $\dot{J}:=J\setminus\{0\}$.

For $\sigma\in \R$ and $\gamma\in [0,1]$ let $C_\sigma((0,T],\E_\gamma)$ be the space of all continuous functions $v:(0,T]\rightarrow \E_\gamma$ such that $t\mapsto t^\sigma v(t)$ stays bounded in the norm of $\E_\gamma$.\\

Throughout we suppose that there exists a number $\alpha\in [0,1)$ such that the following assumptions hold:\\
\begin{itemize}

\item[$(A_1)$] The function $\h\in L_{\infty,loc}^+(\R^+)$ satisfies  $\varlimsup\limits_{a\rightarrow\infty}\frac{\h(a)}{\g (a)}<\infty$, and there exists $\zeta>0$ such that for each $T>0$ there is $c(T)>0$ with 
$$
\vert\h(t+a)-\h(t_*+a)\vert\,\le\,c(T)\,\g (a)\, \vert t-t_*\vert^{\zeta}\ ,\quad 0\le t,t_*\le T\ ,\quad a\ge 0\ .$$

\item[$(A_2)$] Given $T_0, R>0$ and $\theta\in (0,1)$ there are numbers $\rho\in (0,1)$, $\omega>0$, $\kappa\ge 1$, \mbox{$\sigma\in \R$}, and $c_0>0$ (depending possibly on $\theta$, $T_0$, and $R$) such that for each $T\in (0,T_0]$ the operator $A=\big[\bar{u}\mapsto A[\bar{u}]\big]$ maps $C^\theta(I_T,E_\alpha)$ into $C^\rho(I_T,\mathcal{L}(E_1,E_0))$
and satisfies
\bqn\label{t5}
\sigma +A[\bar{u}]\in C\big( I_T,\mathcal{H}(E_1,E_0;\kappa,\omega) \big)\ ,
\qquad \left\|A[\bar{u}]\right\|_{C^\rho(I_T,\mathcal{L}(E_1,E_0))}\,\le\,c_0\ ,
\eqn
and
\bqn\label{t6}
\left\|A[\bar{u}]-A[\bar{u}_*]\right\|_{C(I_T,\mathcal{L}(E_1,E_0))}\,\le\,c_0\, \|\bar{u}-\bar{u}_*\|_{C(I_T,E_\alpha)}
\eqn
for all $\bar{u},\bar{u}_*\in C^\theta(I_T,E_\alpha)$ with $\|\bar{u}\|_{C^\theta(I_T,E_\alpha)}\le R$ and $\|\bar{u}_*\|_{C^\theta(I_T,E_\alpha)}\le R$.
Moreover, if $0<T<S$ and $\bar{u},\bar{u}_*\in C(I_S,E_{\alpha})$ with $u\big\vert_{I_T}=u_*\big\vert_{I_T}$, then $A[\bar{u}]\big\vert_{I_T}=A[\bar{u}_*]\big\vert_{I_T}$. 

\item[$(A_3)$] There exists $\mu>0$ such that, for $0<T\le T_0$ and $R>0$, the function $B$ maps $C(I_T,\mathbb{E}_{\alpha})$ into $C(I_T, E_\mu)$, and there exists some $c_0=c_0(T_0,R)>0$ such that
\bqn\label{B}
\|B[u]-B[u_*]\|_{C(I_T,E_\mu)}\,\le\, c_0\, \|u-u_*\|_{C(I_T,\mathbb{E}_\alpha)}
\eqn
provided that $u,u_*\in C(I_T,\mathbb{E}_\alpha)$ with $\|u\|_{C(I_T,\mathbb{E}_{\alpha})}\le R$ and $\|u_*\|_{C(I_T,\mathbb{E}_{\alpha})}\le R$. In addition, if $0<T<S$ and $u,u_*\in C(I_S,\mathbb{E}_{\alpha})$ with $u\big\vert_{I_T}=u_*\big\vert_{I_T}$, then $B[u]\big\vert_{I_T}=B[u_*]\big\vert_{I_T}$. 

\item[$(A_4)$] The function $m\in C(\R^+\times\R^+\times E_{\alpha},\R)$ is such that, given $T>0$ and $R>0$, there exists $c_0=c_0(T,R)>0$ with
$$
\vert m(t,a,\bar{u})-m(t,a,\bar{u}_*)\vert\,\le\, c_0\, \|\bar{u}-\bar{u}_*\|_{E_{\alpha}}
$$
and
\bqn\label{mm}
 e^{-m(t,a,\bar{u})}\,\le\, c_0
\eqn
for $t\in I_T$, $a> 0$, and $ \|\bar{u}\|_{E_{\alpha}},  \|\bar{u}_*\|_{E_{\alpha}}\le R$.\\
\end{itemize}

The latter assumptions in $(A_2)$ and $(A_3)$ guarantee that equations \mbox{\eqref{1}-\eqref{4}} pose a proper time evolution problem, that is, the solution depends at each time $t$ only on the past but not on the future. In Section \ref{examples} we will give concrete examples for operators $A$ and $B$ satisfying $(A_2)$ and $(A_3)$, respectively. In particular, it will be shown that if $A$ depends locally with respect to time on $\bar{u}$ and if $E_1$ is compactly embedded in $E_0$, then $(A_2)$ is rather easy to verify in applications (see Proposition \ref{ex1} and Corollary~\ref{C22}). Introducing the function $\g$ in the definition of the spaces $\E_\theta$ allows to give a meaning to \eqref{4} for $u\in \E_0^{I_T}$ in view of assumption $(A_1)$. Also note that \eqref{mm} is trivially satisfied if $m$ is non-negative or bounded.\\

In order to introduce the notion of a solution to \eqref{1}-\eqref{4}, we first observe that if $\bar{u}:I_T\rightarrow E_\alpha$ is H\"older continuous,
then \cite[II.Cor.4.4.2]{LQPP} and \eqref{t5} ensure that $-A[\bar{u}]$ generates a unique evolution system $U_{A[\bar{u}]}(t,s)$, $0\le s\le t\le T$, on $E_0$. 

\begin{deff}
A function $u\in C(J,\E_\alpha)$ is a {\it generalized solution} to \eqref{1}-\eqref{4} on an interval $J$ provided that
\begin{itemize}
\item[(i)] $\bar{u}:J\rightarrow E_\alpha$ is H\"older continuous,
\item[(ii)] $u$ satisfies 
    \bqnn
     u(t,a)\, =\, \left\{ \begin{aligned}
    &e^{-\int_0^am_{\bar{u}}(s+t-a,s)\rd s}\, U_{A[\bar{u}]}(t,t-a)\, B[u](t-a)\ ,\quad & 0\le a< t&\ ,\\
    & e^{-\int_0^t m_{\bar{u}}(s,s+a-t)\rd s}\, U_{A[\bar{u}]}(t,0)\, u^0(a-t)\ ,& 0\le t<a&\ ,
    \end{aligned}
   \right.
    \eqnn
    for $t\in J$ and $a>0$, where $m_{\bar{u}}(t,a):=m\big(t,a,\bar{u}(t)\big)$. 
\end{itemize}
\end{deff}    
The notion of a generalized solution is derived by integrating \eqref{1}-\eqref{4} formally along characteristics. Proposition \ref{P1} below gives more details regarding further regularity of generalized solutions.\\

We first state an existence and uniqueness result for generalized solutions to \eqref{1}-\eqref{4}.

\begin{thm}\label{T}
Suppose $(A_1)-(A_4)$ with $\alpha\in [0,1)$ and let $0\le \alpha<\beta\le 1$. Then, given $u^0\in\E_\beta$, there exists a unique maximal generalized solution $u:=u(\cdot;u^0)$ to \eqref{1}-\eqref{4} on an interval $J:=J(u^0)$ with $$u\in C(J,\E_\beta)\cap C_{\upsilon-\beta}((0,T],\E_\upsilon)\ ,\quad \beta\le \upsilon\le 1\ , \quad T\in\dot{J}\ .$$
Moreover, 
$$
\int_0^\infty u(\cdot,a)\,\tilde{\h}(a)\, \rd a \in C^{\zeta\wedge (\beta-\gamma)}(J,E_\gamma)
$$ 
for $\gamma\in [\alpha,\beta)$ and any function $\tilde{\h}$ satisfying $(A_1)$. In addition,
\bqn\label{dd}
\int_0^\infty u(\cdot,a)\,\rd a \in C^1(\dot{J},E_0)\cap C(\dot{J},E_1)\ .
\eqn
The maximal interval of existence, $J$, is open in $\R^+$, and if 
\bqn\label{global}
\sup_{t\in J\cap [0,T]} \| u(t,\cdot)\|_{\E_\beta}\,<\,\infty\ ,\quad T>0\ ,
\eqn
then the solution exists globally, that is, $J=\R^+$.
\end{thm}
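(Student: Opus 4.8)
The plan is to prove Theorem~\ref{T} by a fixed-point argument on the integral formula defining a generalized solution, combined with a continuation (maximality) step. First I would fix $u^0\in\E_\beta$ and, for small $T>0$ and suitable $R>0$, introduce the complete metric space
$$
\mathbb{X}_T(R):=\bigl\{u\in C(I_T,\E_\beta)\cap C_{\upsilon-\beta}((0,T],\E_\upsilon)\ :\ u(0,\cdot)=u^0,\ \|u\|\le R\bigr\}
$$
with a norm controlling both the $C(I_T,\E_\beta)$-part and the singular $C_{\upsilon-\beta}((0,T],\E_\upsilon)$-parts for $\upsilon$ in $(\beta,1]$ (in practice it suffices to take one or two values of $\upsilon$, e.g. $\upsilon=\gamma$ slightly above $\alpha$ and $\upsilon$ close to $1$). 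On $\mathbb{X}_T(R)$ I would define the map $\Gamma$ by the right-hand side of the two-branch formula in the Definition, i.e. $\Gamma(u)(t,a)$ equals the exponential times $U_{A[\bar u]}(t,t-a)B[u](t-a)$ for $a<t$ and the exponential times $U_{A[\bar u]}(t,0)u^0(a-t)$ for $t<a$, where $\bar u(t)=\int_0^\infty u(t,a)h(a)\,\rd a$. The bulk of the work is to show $\Gamma$ is a self-map and a contraction on $\mathbb{X}_T(R)$ for $T=T(u^0)$ small.

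The key estimates needed are standard but require care with the weighted $L_1$-spaces $\E_\theta$. Using $(A_1)$ and the submultiplicativity \eqref{a1111} of $g$, together with $(A_2)$ and \cite[II.Cor.4.4.2]{LQPP}, I would first check that for H\"older continuous $\bar u$ the evolution system $U_{A[\bar u]}(t,s)$ enjoys the parabolic smoothing estimates
$$
\|U_{A[\bar u]}(t,s)\|_{\mathcal{L}(E_0,E_\upsilon)}\le c\,(t-s)^{-\upsilon},\qquad
\|U_{A[\bar u]}(t,s)\|_{\mathcal{L}(E_\beta,E_\upsilon)}\le c\,(t-s)^{\beta-\upsilon},
$$
uniformly for $\bar u$ in a ball of $C^\theta(I_T,E_\alpha)$, with the constants depending only on $\kappa,\omega,\sigma$ from $(A_2)$. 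Integrating the two branches against $g(a)\,\rd a$, splitting $a<t$ from $a\ge t$, using $(A_3)$ for the $B[u]$-term (which lands in $C(I_T,E_\mu)\hookrightarrow C(I_T,E_0)$) and $(A_4)$ plus \eqref{mm} to bound the exponential factors, yields $\|\Gamma(u)(t)\|_{\E_\upsilon}\le c(t^{\beta-\upsilon}\|u^0\|_{\E_\beta}+t^{\,\text{(pos.)}}R)$, which gives the self-map property once $T$ is small; the crucial point is that the $u^0$-branch reproduces exactly the singularity $t^{\upsilon-\beta}$ allowed in $C_{\upsilon-\beta}$, while the $B[u]$-branch is even better because $B[u]$ has a bit of spatial regularity. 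Continuity in $t$ of $\Gamma(u)$ into $\E_\beta$ (and the right-continuity at $t=0$ with value $u^0$) follows by the usual dominated-convergence argument on the branch split. The contraction estimate uses the Lipschitz bounds \eqref{t6}, \eqref{B}, $(A_4)$, and the Lipschitz dependence of evolution systems on their generators (again \cite[II.Cor.4.4.2]{LQPP}); here one needs that $\|\bar u-\bar u_*\|_{C(I_T,E_\alpha)}\lesssim\|u-u_*\|_{C(I_T,\E_\alpha)}$, which is immediate from $(A_1)$.

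Once local existence and uniqueness on some $I_T$ are in hand, maximality is obtained by a standard union-of-solutions argument: the restriction/causality conditions in $(A_2)$ and $(A_3)$ guarantee that two generalized solutions on overlapping intervals agree on the overlap, so one defines $J:=J(u^0)$ as the union of all intervals carrying a solution, and $J$ is relatively open in $\R^+$ because the local existence time $T(v^0)$ can be bounded below in terms of $\|v^0\|_{\E_\beta}$. For the H\"older regularity of $\int_0^\infty u(\cdot,a)\tilde h(a)\,\rd a$ in $E_\gamma$, I would insert the solution formula, differentiate/estimate the two branches, and use the H\"older continuity in $s$ of $a\mapsto\tilde h(\cdot+a)$ from $(A_1)$ together with $\|U_{A[\bar u]}(t,s)-U_{A[\bar u]}(t',s)\|\lesssim|t-t'|^{\cdot}$ smoothing bounds, obtaining the exponent $\zeta\wedge(\beta-\gamma)$; the identity \eqref{dd} for the unweighted integral $\int_0^\infty u(\cdot,a)\,\rd a$ then comes from Proposition~\ref{P1}'s regularity statement, i.e. from the fact that the formula actually solves \eqref{1}--\eqref{4} classically in $E_0$ for $t>0$. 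Finally, the global-existence criterion: if $\sup_{t\in J\cap[0,T]}\|u(t,\cdot)\|_{\E_\beta}<\infty$ for every $T$ but $T^+:=\sup J<\infty$, then along $t\uparrow T^+$ the solution values stay in a bounded set of $\E_\beta$, so the uniform lower bound on the local existence time lets one extend $u$ past $T^+$, contradicting maximality; hence $J=\R^+$.

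I expect the main obstacle to be the self-map step in the weighted spaces $\E_\upsilon$: one must track the interplay between the age-weight $g$, the parabolic smoothing singularity $(t-s)^{\beta-\upsilon}$ coming from $U_{A[\bar u]}$, and the change of variables $a\mapsto t-a$ in the boundary branch, making sure the resulting time-singularity is \emph{exactly} $t^{\upsilon-\beta}$ (no worse) so that $\Gamma$ maps $\mathbb{X}_T(R)$ into itself, and simultaneously that all constants are uniform over the relevant ball so the same $R$ works; the causality clauses in $(A_2)$, $(A_3)$ are what make the maximality argument clean, and $(A_1)$'s submultiplicativity is exactly what is needed to keep the $\E_\theta$-norms finite after shifting the age variable.
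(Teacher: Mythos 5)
Your overall architecture (fixed point on the characteristics formula, then continuation) is the paper's, but the specific fixed-point space you choose breaks down at two points. First, under $(A_2)$ the operator $A[\bar u]$ is only defined, and only generates an evolution system with uniform bounds, when $\bar u$ is H\"older continuous in time; your space $\mathbb{X}_T(R)\subset C(I_T,\E_\beta)\cap C_{\upsilon-\beta}((0,T],\E_\upsilon)$ imposes no such condition, so $\Gamma$ is not even well defined on it, and you never verify that the H\"older seminorm of $\overline{\Gamma(u)}$ is reproduced. This closure step is not free: in the paper it is the estimate \eqref{h}, which needs the H\"older hypothesis on $\h$ in $(A_1)$ and the deliberate choice of exponent $\theta<\zeta\wedge(\beta-\gamma)$ with $\gamma<\beta$. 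Second, the contraction cannot be closed in your norm: the Lipschitz dependence of the evolution system on the generator, \eqref{y3}, applied to the $u^0$-branch gives only $\|U_{A[\bar u]}(t,0)-U_{A[\bar u_*]}(t,0)\|_{\mathcal{L}(E_\beta,E_\xi)}\lesssim t^{\beta-\xi}\|u-u_*\|$, which carries no small factor when $\xi=\beta$ (nor after weighting by $t^{\upsilon-\beta}$ for the $\E_\upsilon$ parts); see \eqref{theta}. Shrinking $T$ therefore does not make $\Gamma$ contractive in $C(I_T,\E_\beta)$. The paper's way out is precisely to run the contraction in the weaker norm $C(I_T,\E_\gamma)$, $\gamma<\beta$, on the space $\mathcal{V}_T$ in \eqref{vv} that has the ball bound and the H\"older bound on $\bar u$ built in, and only afterwards to read off $u\in C(I_T,\E_\beta)$ and $u\in C_{\upsilon-\beta}((0,T],\E_\upsilon)$ from the fixed-point formula and the smoothing estimates.

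Two further points. Your derivation of \eqref{dd} from Proposition \ref{P1} does not work: that proposition requires the extra hypotheses \eqref{P10}, \eqref{P11} on $u^0$, $B[u]$ and $m$, which are not available for a general $u^0\in\E_\beta$ in Theorem \ref{T}. The paper instead takes $\h\equiv 1$ in \eqref{f}, so that the evolution operator factors out of the age integral, differentiates in $t$, and invokes maximal regularity of the evolution system to get $C^1(\dot J,E_0)\cap C(\dot J,E_1)$. Finally, your continuation step is too quick for the non-local-in-time setting: since $A[\bar u](t)$ and $B[u](t)$ may depend on the whole history $u|_{[0,t]}$, one cannot simply restart at $t=T$ with initial value $u(T)$; one must solve the shifted problem with operators $\hat A[\bar v](t)=A[\bar V](t+T)$, $\hat B[v](t)=B[V](t+T)$ built from the concatenation $V$ of the already constructed solution with the unknown, and then use the causality clauses of $(A_2)$, $(A_3)$ to identify the glued function as a generalized solution; the lower bound on the new existence time comes from re-running the fixed point with $R$ as in \eqref{55}, which is also what makes the blow-up criterion \eqref{global} give $J=\R^+$.
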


A proof of this theorem will be given in Section \ref{proof of theorem T}. Before providing more properties of the generalized solution, we shall emphasize that the regularity assumptions on the operators $A$ and $B$ in $(A_2)$ and $(A_3)$ are imposed to overcome the difficulties induced by the quasi-linear structure of $A=A[\bar{u}]$. Indeed,  in the case of ``linear diffusion'', that is, if $A=A(t)$ depends possibly on time but is independent of $\bar{u}$, less assumptions are required. For simplicity, we state the following remark for a function $m=m(t,a)$ that is independent of $\bar{u}$.

\begin{rem}\label{R0}
Suppose that $A\in C^\rho (\R^+,\mathcal{H}(E_1,E_0))$ for some $\rho>0$, and for each $T>0$ let there be numbers $0\le \alpha\le \beta \le 1$ with $(\alpha,\beta)\not=(0,1)$ such that the function \mbox{$B:C(I_T,\mathbb{E}_\beta)\rightarrow C(I_T,E_\alpha)$} is uniformly Lipschitz continuous on bounded sets and satisfies
$B[u]\vert_{I_T}=B[u_*]\vert_{I_T}$ for $0<T<S$, $u,u_*\in C(I_S,\mathbb{E}_{\beta})$, and $u\vert_{I_T}=u_*\vert_{I_T}$. If $m\in C(\R^+\times\R^+)$ is bounded, then the problem
\bear
\partial_t u\,+\, \partial_au &=& -A(t)\,u\,-\,m(t,a)\, u \ ,\quad t>0\ ,\quad a>0\ ,\notag\\
u(t,0)&=&B[u](t)\ ,\quad t>0\ ,\notag\\
u(0,a)&=& u^0(a)\ ,\quad a>0\ ,\notag
\eear
admits for each $u^0\in\E_\beta$ a unique maximal generalized solution $u\in C(J,\E_\beta)$, which exists globally if \eqref{global} holds.
\end{rem}

\noindent A proof of this remark follows along the lines of the proof of Theorem \ref{T} and we thus omit details.\\

\noindent We now give additional properties of the generalized solution. For the rest of this section, we suppose the assumptions of Theorem \ref{T}, and we fix $u^0\in\E_\beta$ and let $u=u(\cdot;u^0)\in C(J,\E_\beta)\cap C_{\upsilon-\beta}((0,T],\E_\upsilon)$ for $T\in \dot{J}$ and $\beta\le \upsilon\le 1$ denote the unique maximal generalized solution to \eqref{1}-\eqref{4} on $J=J(u^0)$ corresponding to $u^0$. \\

First we mention that the solution depends continuously on the initial value $u^0\in \E_\beta$. More precisely, we have

\begin{cor}\label{C1}
Given $u^0\in \E_\beta$ there exists $\delta>0$ and $T=T(u^0)>0$ such that $J(u_*^0)\supset [0,T]$ for every $u_*^0\in\E_\beta$ with $\left\| u^0-u_*^0\right\|_{\E_\beta}\le \delta$. Moreover, $u(\cdot;u_*^0)\rightarrow u(\cdot;u^0)$ in $C([0,T],\E_\beta)$ as $u_*^0\rightarrow u^0$ in $\E_\beta$.
\end{cor}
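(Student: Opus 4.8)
The plan is to run a fixed-point argument in a space of functions on a \emph{fixed} interval $[0,T]$, with the radius and the time $T$ chosen uniformly for all initial data in a small $\E_\beta$-ball around $u^0$, and then to read off continuous dependence from the contraction estimate. First I would record that, by Theorem \ref{T}, the solution $u=u(\cdot;u^0)$ exists on some $[0,T_1]\subset J(u^0)$ and lies in $C([0,T_1],\E_\beta)\cap C_{\upsilon-\beta}((0,T_1],\E_\upsilon)$; in particular $\sup_{t\in[0,T_1]}\|u(t)\|_{\E_\beta}=:R_0<\infty$ and $\bar u$ is H\"older continuous on $[0,T_1]$ with some exponent $\theta$ and norm bounded by some $R_1$. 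I would then set $R:=2R_0$ (and the corresponding H\"older radius) and revisit the construction used to prove Theorem \ref{T}: the generalized-solution formula in the Definition defines a map $u\mapsto\Gamma_{u^0}[u]$ on the ball $\{\|u\|\le R\}$ of the appropriate Hölder/weighted space on $[0,T]$, and the assumptions $(A_1)$–$(A_4)$ — specifically the Lipschitz bounds \eqref{t6}, \eqref{B}, $(A_4)$, together with the standard estimates on the evolution system $U_{A[\bar u]}(t,s)$ from \cite{LQPP} — give, for $T$ small enough, that $\Gamma_{u^0}$ maps the ball into itself and is a strict contraction there, \emph{uniformly in $u^0$ ranging over $\{\|u^0-\bar u^0\|_{\E_\beta}\le\delta\}$ for $\delta$ small}; here one uses that the dependence of $\Gamma_{u^0}$ on $u^0$ enters only through the additive term built from $u^0(a-t)$, which is controlled in the relevant norm by $\|u^0\|_{\E_\beta}\le R_0+\delta$. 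This fixes $\delta>0$ and $T\le T_1$ so that for every such $u_*^0$ the map $\Gamma_{u_*^0}$ has a unique fixed point in the ball; by uniqueness in Theorem \ref{T} this fixed point is $u(\cdot;u_*^0)\big|_{[0,T]}$, so in particular $J(u_*^0)\supset[0,T]$.

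For the convergence statement I would exploit the contraction constant. Writing $q<1$ for the common Lipschitz constant of all the $\Gamma_{u_*^0}$ on the ball, and $u_*=u(\cdot;u_*^0)$, $u=u(\cdot;u^0)$ for their fixed points, the standard estimate
\[
\|u_*-u\|_{C([0,T],\E_\beta)}\;\le\;\|\Gamma_{u_*^0}[u_*]-\Gamma_{u_*^0}[u]\|+\|\Gamma_{u_*^0}[u]-\Gamma_{u^0}[u]\|\;\le\; q\,\|u_*-u\|+\|\Gamma_{u_*^0}[u]-\Gamma_{u^0}[u]\|
\]
gives
\[
\|u_*-u\|_{C([0,T],\E_\beta)}\;\le\;\frac{1}{1-q}\,\big\|\Gamma_{u_*^0}[u]-\Gamma_{u^0}[u]\big\|\ ,
\]
and the right-hand side is, up to the constants coming from $U_{A[\bar u]}$ and the exponential factor \eqref{mm}, bounded by $C\,\|u_*^0-u^0\|_{\E_\beta}$, since $\Gamma_{u_*^0}[u]$ and $\Gamma_{u^0}[u]$ differ only in the $0\le t<a$ branch, through $U_{A[\bar u]}(t,0)(u_*^0-u^0)(a-t)$, and $\|U_{A[\bar u]}(t,0)\|_{\mathcal L(\E_\beta)}$ is uniformly bounded on $[0,T]$. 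This yields $u(\cdot;u_*^0)\to u(\cdot;u^0)$ in $C([0,T],\E_\beta)$ as $u_*^0\to u^0$ in $\E_\beta$, with a linear (Lipschitz) rate, which is in fact slightly stronger than asserted.

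The main obstacle is bookkeeping rather than a genuinely new idea: one must make sure that a \emph{single} choice of ball radius $R$, Hölder radius, and time $T$ works for the whole family $\{\Gamma_{u_*^0}:\|u_*^0-u^0\|_{\E_\beta}\le\delta\}$ simultaneously — i.e.\ that the smallness conditions on $T$ extracted in the proof of Theorem \ref{T} depend on $u^0$ only through an upper bound for $R_0$ and for the Hölder data of $\bar u$, both of which are stable under the $\delta$-perturbation. This is where one has to be a little careful, because $A[\bar u]$ enters nonlinearly; but $(A_2)$ is formulated precisely with constants depending only on $T_0$ and on the radius $R$, so once the a priori ball is fixed the constants $\rho,\omega,\kappa,\sigma,c_0$ and hence all ensuing estimates on $U_{A[\bar u]}(t,s)$ are uniform over the family, and the argument closes. (An alternative, essentially equivalent route is to quote an abstract continuous-dependence lemma for parameter-dependent contractions, but since the contraction framework is already set up in Section \ref{proof of theorem T} it is cleaner to argue directly as above.)
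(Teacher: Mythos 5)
Your strategy is essentially the paper's: choose $\delta$ and $T$ so that the fixed-point scheme from the proof of Theorem \ref{T} runs uniformly for all initial data in the $\delta$-ball (the constants in $(A_2)$--$(A_4)$ and the evolution-system estimates depend only on $T_0$ and $R$, and the paper secures uniformity simply by choosing $\delta$ so that \eqref{5} still holds with $u^0$ replaced by $u_*^0$), and then estimate the difference of the two fixed points, noting that the initial datum enters only through the $t<a$ branch, where $\|U_{A[\bar u_*]}(t,0)\|_{\mathcal L(E_\beta,E_\xi)}$ is bounded; this gives the same Lipschitz dependence the paper obtains.

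One step, however, needs repair. You apply the parameter-dependent contraction estimate with a constant $q<1$ directly in $C([0,T],\E_\beta)$, but the contraction established in Section \ref{proof of theorem T} holds only in the weaker metric of $\mathcal{V}_T$, i.e. in $C([0,T],\E_\gamma)$ with $\gamma<\beta$. The obstruction is the term coming from $U_{A[\bar u]}(t,0)-U_{A[\bar u_*]}(t,0)$ applied to $u^0$: by \eqref{y3} it is controlled in $\mathcal L(E_\beta,E_\xi)$ only by $c\,t^{\beta-\xi}\|u-u_*\|_{\mathcal V_T}$, so in \eqref{theta} the factor $t^{\beta-\xi}$ loses all smallness when $\xi=\beta$ (and \eqref{y3} is not even applicable there if $\beta=1$); shrinking $T$ therefore does not make the solution map a strict contraction in the $\E_\beta$-based norm, and your displayed inequality with $q<1$ is not justified as written. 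The fix is exactly the paper's two-step bootstrap, which slots into your argument with no new ideas: first take $\xi=\gamma<\beta$ in the analogue of \eqref{theta} augmented by the initial-data term (bounded by $c\|u^0-u_*^0\|_{\E_\beta}$, as you observe), absorb the small factor, and conclude $\|u-u_*\|_{\mathcal V_T}\le c\|u^0-u_*^0\|_{\E_\beta}$; then take $\xi=\beta$ in the same estimate, where the constant need not be small, and insert the $\mathcal V_T$-bound to obtain $\|u(t)-u_*(t)\|_{\E_\beta}\le c\|u^0-u_*^0\|_{\E_\beta}$ on $[0,T]$. With that modification your proof coincides with the paper's.
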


Next, we note that the solution possesses more regularity if the date are more regular.

\begin{prop}\label{P1}
Suppose that 
\bqn\label{P10}
u^0\in \E_\beta\cap C^1(\R^+,E_0)\cap C(\R^+,E_1)\ .
\eqn
In addition, let
\bqn\label{P11}
B[u]\in C^1(J,E_0)\cap C(J,E_1)\qquad\text{and}\qquad
m_{\bar{u}}\in C^{0,1}(J\times\R^+)\cup C^{1,0}(J\times\R^+)\ .
\eqn
Then, for all $t\in\dot{J}$ and $a>0$, we have
    \begin{align}
    \partial_t u(t,\cdot)\,,\, \partial_a u(t,\cdot)\in
    C([0,t],E_0)\cap C((t,\infty),E_0)\ ,\label{r1}\\
    \partial_t u(\cdot,a)\,,\, \partial_a u(\cdot,a)\in
    C([0,a)\cap J,E_0)\cap C([a,\infty)\cap J,E_0)\ ,\label{r2}
    \end{align}
and $u$ solves \eqref{1}-\eqref{3} pointwise in $E_0$ for $t\not= a$.
\end{prop}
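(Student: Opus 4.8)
The plan is to differentiate the explicit representation formula from the definition of a generalized solution, separately on the two characteristic regions $\{a<t\}$ and $\{t<a\}$, and to verify that each piece is $C^1$ into $E_0$ with the claimed one-sided continuity. The starting point is the observation that under \eqref{P10} and \eqref{P11} all the ingredients entering the formula are differentiable in the right sense: the multiplier $e^{-\int m_{\bar u}}$ is $C^1$ in both variables because $m_{\bar u}\in C^{0,1}\cup C^{1,0}(J\times\R^+)$ (so that the integrals $\int_0^a m_{\bar u}(s+t-a,s)\,\rd s$ and $\int_0^t m_{\bar u}(s,s+a-t)\,\rd s$ are $C^1$ in $(t,a)$); the data $B[u]$ and $u^0$ are $C^1$ into $E_0$ and $C$ into $E_1$; and, crucially, the evolution system $U_{A[\bar u]}(t,s)$ has the standard parabolic smoothing and differentiability properties recalled from \cite{LQPP}. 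First I would record precisely which properties of $U_{A[\bar u]}(t,s)$ are needed: that $(t,s)\mapsto U_{A[\bar u]}(t,s)$ is strongly continuous on the simplex, that for $x\in E_1$ the map $s\mapsto U_{A[\bar u]}(t,s)x$ is $C^1$ into $E_0$ with $\partial_s U_{A[\bar u]}(t,s)x = U_{A[\bar u]}(t,s)A[\bar u](s)x$, that for $t>s$ one has $U_{A[\bar u]}(t,s)\in\mathcal{L}(E_0,E_1)$ with $\partial_t U_{A[\bar u]}(t,s)x = -A[\bar u](t)U_{A[\bar u]}(t,s)x$, and that the relevant maps are continuous up to the diagonal when composed with the regular data. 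These are exactly the statements in \cite[II.\S4, II.Cor.4.4.2]{LQPP} that already underlie the definition of the generalized solution, and $(A_2)$ (Hölder continuity of $t\mapsto A[\bar u](t)$) is what makes them applicable.

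Next I would carry out the differentiation region by region. On $\{0\le a<t\}$, write $u(t,a)=\Pi(t,a)\,U_{A[\bar u]}(t,t-a)\,B[u](t-a)$ with $\Pi(t,a)=\exp(-\int_0^a m_{\bar u}(s+t-a,s)\,\rd s)$. Since $B[u](t-a)\in E_1$ by \eqref{P11}, the factor $U_{A[\bar u]}(t,t-a)B[u](t-a)$ is differentiable in both $t$ and $a$: differentiating in $t$ produces a term with $\partial_t U$ (which lands in $E_0$ because $t>t-a$) plus a term with $U$ applied to $(B[u])'(t-a)\in E_0$; differentiating in $a$ produces a term with $\partial_s U$ evaluated at the second slot $t-a$ plus a term with $U$ applied to $-(B[u])'(t-a)$. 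All resulting terms are continuous into $E_0$ for $0\le a<t$, and remain continuous as $a\uparrow t$ because $U_{A[\bar u]}(t,s)\to \mathrm{id}$ strongly as $s\uparrow t$ and the data are continuous — this gives the "$C([0,t],E_0)$" half of \eqref{r1} and, after swapping the roles of the variables, the "$C([0,a)\cap J,E_0)$" half of \eqref{r2}. On $\{0\le t<a\}$, write $u(t,a)=\Xi(t,a)\,U_{A[\bar u]}(t,0)\,u^0(a-t)$ with $\Xi(t,a)=\exp(-\int_0^t m_{\bar u}(s,s+a-t)\,\rd s)$; here $u^0(a-t)\in E_1$ by \eqref{P10}, so the same scheme applies, differentiating $U_{A[\bar u]}(t,0)$ in its first slot and differentiating $u^0$, and all terms are continuous into $E_0$ on $\{t<a\}$, extending continuously up to $t=a$ (in the $t$-variable, i.e. on $[a,\infty)\cap J$) because $U_{A[\bar u]}(t,0)$ is continuous there and $u^0$ is continuous. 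Assembling the two regions yields \eqref{r1} and \eqref{r2}.

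Finally, once the one-sided derivatives are in hand, verifying that $u$ solves \eqref{1}--\eqref{3} pointwise in $E_0$ for $t\neq a$ is a direct computation: add $\partial_t u$ and $\partial_a u$ in each region and watch the terms cancel. In the region $a<t$, the two $\pm U\,(B[u])'(t-a)$ contributions cancel, the two $\partial_t U$ and $\partial_s U$ contributions combine — using $\partial_t U_{A[\bar u]}(t,s)x + \partial_s U_{A[\bar u]}(t,s)x$ evaluated appropriately, which by the evolution-system identities equals $-A[\bar u](t)U_{A[\bar u]}(t,s)x$ — to produce exactly $-A[\bar u](t)u(t,a)$, and the derivatives of the exponential prefactor $\Pi$ give $-m_{\bar u}(t,a)\,u(t,a)$ because $(\partial_t+\partial_a)\int_0^a m_{\bar u}(s+t-a,s)\,\rd s = m_{\bar u}(t,a)$ (the $t$-derivative produces $\int_0^a \partial_1 m_{\bar u}$, the $a$-derivative produces $m_{\bar u}(t,a)-\int_0^a\partial_1 m_{\bar u}$, and the integrals cancel — this is where the regularity hypothesis on $m_{\bar u}$ in \eqref{P11} is used, and the $C^{0,1}\cup C^{1,0}$ alternative is exactly what is needed for this chain rule/Leibniz step to be legitimate in either order of differentiation). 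The region $t<a$ is analogous, using $(\partial_t+\partial_a)\int_0^t m_{\bar u}(s,s+a-t)\,\rd s = m_{\bar u}(t,a)$ and $\partial_t[U_{A[\bar u]}(t,0)u^0(a-t)] + \partial_a[\cdots] = -A[\bar u](t)U_{A[\bar u]}(t,0)u^0(a-t)$. At $t=0$ and at $a=0$ the boundary/initial conditions \eqref{2}--\eqref{3} follow by taking the corresponding one-sided limits in \eqref{r1}--\eqref{r2}. I expect the main obstacle to be bookkeeping: marshalling precisely the differentiability properties of $U_{A[\bar u]}(t,s)$ from \cite{LQPP} (in particular the behaviour of $\partial_s U$ versus $\partial_t U$ and their continuity up to the diagonal when paired with $E_1$-valued data), and being careful that the Leibniz differentiation of the exponential factors is valid — which is exactly why \eqref{P11} allows either $C^{0,1}$ or $C^{1,0}$ rather than requiring full joint $C^1$ regularity of $m_{\bar u}$.
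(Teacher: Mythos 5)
Your proposal is correct and follows essentially the same route as the paper: differentiate the representation formula of the generalized solution region by region using the evolution-system identities $\partial_t U_{A[\bar u]}(t,s)w=-A[\bar u](t)U_{A[\bar u]}(t,s)w$ and $\partial_s U_{A[\bar u]}(t,s)v=U_{A[\bar u]}(t,s)A[\bar u](s)v$ for $v\in E_1$, together with Leibniz differentiation of the exponential factors, and then read off the cancellations giving \eqref{1}--\eqref{3} for $t\neq a$. The paper simply records the resulting explicit formulas for $\partial_t u$ and $\partial_a u$ and concludes \eqref{r1}, \eqref{r2} from the same continuity properties of $U_{A[\bar u]}$, $B[u]$, $u^0$, and $m_{\bar u}$ that you invoke.
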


Since $u$ represents a density in applications, one expects it to be non-negative. The next result establishes this positivity result if $E_0$ is an ordered B-space with positive cone $E_0^+$. In this case we put $$\E_\theta^+:=L_1(\R^+,E_\theta^+,\g (a)\rd a)\quad\text{with}\quad E_\theta^+:=E_\theta\cap E_0^+\ .$$ 
We refer to \cite{LQPP} for more information about operators on ordered B-spaces.

\begin{prop}\label{P2}
Suppose that $E_0$ is an ordered B-space with positive cone $E_0^+$. Given $T>0$, $\theta>0$, and $\bar{v}\in C^\theta([0,T],E_\alpha)$ let the linear operator $A[\bar{v}](t)$ be resolvent positive for each $t\in [0,T]$. Further suppose that $B$ maps $C(I_T,\E_\alpha^+)$ into $C(I_T,E_\mu^+)$. Then $u^0\in\E_\beta^+$ implies $u(t)\in \E_\beta^+$ for $t\in J$.
\end{prop}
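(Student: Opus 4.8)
The plan is to read positivity directly off the representation formula~(ii) in the definition of a generalized solution and to propagate it through the contraction-mapping construction underlying Theorem~\ref{T}. As a preliminary I would record the relevant closedness facts: since the positive cone $E_0^+$ is closed in $E_0$ and $E_\theta\hookrightarrow E_0$, the set $E_\theta^+=E_\theta\cap E_0^+$ is closed in $E_\theta$ for every $\theta\in[0,1]$; hence $\E_\theta^+=L_1(\R^+,E_\theta^+,\g(a)\rd a)$ is closed in $\E_\theta$, and $C(I_T,\E_\theta^+)$ is closed in $C(I_T,\E_\theta)$. I would also note that if $v\in C(I_T,\E_\beta^+)$ then $\bar v(t)=\int_0^\infty v(t,a)\,\h(a)\,\rd a\in E_\alpha^+$ for each $t$ — being an $E_\alpha$-limit of Riemann sums of elements of the closed cone $E_\alpha^+$, as $\h\ge0$ by $(A_1)$ — and that $v\in C(I_T,\E_\alpha^+)$ since $\E_\beta^+\subset\E_\alpha^+$.

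The key analytic ingredient is the positivity of the evolution system. For $\bar v\in C^\theta(I_T,E_\alpha)$ each $A[\bar v](t)$ is resolvent positive by hypothesis, so, since $-A[\bar v](t)$ generates an analytic semigroup, the exponential formula $e^{-sA[\bar v](t)}x=\lim_{n\to\infty}\big[\tfrac ns\big(\tfrac ns+A[\bar v](t)\big)^{-1}\big]^nx$ together with the closedness of $E_0^+$ shows that every semigroup $e^{-sA[\bar v](t)}$ is a positive operator. Approximating the parabolic evolution system $U_{A[\bar v]}(t,s)$ by finite products $e^{-(t_N-t_{N-1})A[\bar v](t_{N-1})}\cdots e^{-(t_1-t_0)A[\bar v](t_0)}$ along partitions $s=t_0<\dots<t_N=t$ with mesh tending to $0$ as in \cite{LQPP} (or invoking directly the results on positive evolution operators there), and passing to the limit, then yields that $U_{A[\bar v]}(t,s)$ is positive for all $0\le s\le t\le T$. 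Moreover the two exponential prefactors $e^{-\int_0^am_{\bar v}(s+t-a,s)\rd s}$ and $e^{-\int_0^tm_{\bar v}(s,s+a-t)\rd s}$ occurring in~(ii) are strictly positive real numbers, as $m$ is real-valued by $(A_4)$.

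With these facts the argument runs as follows. On a sufficiently small interval $I_T$ the solution is obtained in Section~\ref{proof of theorem T} as the unique fixed point of the map $\Gamma$ given by the right-hand side of~(ii), with $\bar v$, $B[v]$ and $u^0$ inserted, on a closed ball $K$ of the relevant solution space; since $K$ may be taken to contain a non-negative function (for instance the solution of the problem with boundary operator $B\equiv 0$, which is non-negative by the previous paragraph), the set $K\cap C(I_T,\E_\beta^+)$ is non-empty, closed, hence complete. If $v\in K\cap C(I_T,\E_\beta^+)$ then $v\in C(I_T,\E_\alpha^+)$, so $B[v]\in C(I_T,E_\mu^+)\subset C(I_T,E_0^+)$ by the positivity hypothesis on $B$, while $u^0(a-t)\in E_0^+$ for a.e.\ $a$ because $u^0\in\E_\beta^+$; combining this with the positivity of $U_{A[\bar v]}(\cdot,\cdot)$ and of the scalar prefactors in~(ii) gives $(\Gamma v)(t,a)\in E_0^+$ for a.e.\ $a$, and since $\Gamma v\in C(I_T,\E_\beta)$ we conclude $\Gamma v\in C(I_T,\E_\beta^+)$. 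Hence $\Gamma$ leaves $K\cap C(I_T,\E_\beta^+)$ invariant, so its unique fixed point — which is the solution on $I_T$ — takes values in $\E_\beta^+$. Finally, since the maximal solution on $J$ is built by iterating this local construction and the solution at the end of each step lies in $\E_\beta^+$ by what was just shown, a routine induction along a locally finite covering of $J$ gives $u(t)\in\E_\beta^+$ for all $t\in J$.

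I expect the only genuinely non-routine point to be the positivity of the non-autonomous evolution operator $U_{A[\bar v]}(t,s)$: one has to be sure that the approximation of $U_{A[\bar v]}$ used in \cite{LQPP} converges in a topology — the strong operator topology on $E_0$ suffices — through which positivity passes to the limit, which is guaranteed under $(A_2)$. Everything else reduces to bookkeeping with the closed cones $E_\theta^+$ and the explicit formula~(ii).
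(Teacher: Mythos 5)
Your proposal is correct and follows essentially the same route as the paper: the paper likewise restricts the contraction map $\Theta$ from the proof of Theorem \ref{T} to the positive subset $\mathcal{V}_T^+$ of its fixed-point space, using that resolvent positivity of $A[\bar v](t)$ yields positivity of the evolution system $U_{A[\bar v]}(t,s)$ (cited there as \cite[II.\S 6.4]{LQPP} rather than re-derived by your semigroup-product approximation), together with the positivity hypothesis on $B$ and the closedness of the cone. The only cosmetic difference is your non-emptiness argument for the positive subset, which is immediate (e.g.\ the zero function, or the constant extension of $u(T)$ in the continuation step), so no extra solution with $B\equiv 0$ is needed.
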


We next focus on global existence. Due to the quasi-linear structure of equation \eqref{1} it is clearly not obvious how to derive estimates like \eqref{global} in general. The next result aims at providing conditions ensuring \eqref{global}.

\begin{prop}\label{P3}
Let $\vartheta\in [0,1]$ with $(\vartheta,\beta)\not= (0,1)$. Suppose that for each $T>0$ there are numbers $\varrho>0$, $\sigma\in\R$, $\kappa\ge 1$, $\omega>0$, and $c_1>0$ depending possibly on $T$ such that
\bqn\label{i}
\sigma+A[\bar{u}]\in C\big(J_T,\mathcal{H}(E_1,E_0;\kappa,\omega)\big)
\eqn
with
\bqn\label{ii}
\left\| A[\bar{u}](t)-A[\bar{u}](t_*)\right\|_{\mathcal{L}(E_1,E_0)}\le c_1 \vert t-t_*\vert^\varrho\ ,\quad t,t_*\in J_T\ ,
\eqn
and
\bqn\label{iii}
\left\| B[u](t)\right\|_{E_\vartheta}\le c_1\big(1+\max_{0\le \tau\le t}\| u(\tau)\|_{\E_\beta}\big) \ ,\quad t\in J_T\ ,
\eqn
where $J_T:=J\cap [0,T]$ for $T>0$. Further suppose that $m$ a non-negative or bounded. Then the solution $u$ exists globally, that is, $J=\R^+$.
\end{prop}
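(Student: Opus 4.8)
The plan is to show that under hypotheses \eqref{i}--\eqref{iii} the blow-up criterion \eqref{global} of Theorem \ref{T} can never be triggered, so $J=\R^+$. Thus I would argue by contradiction: suppose $J=[0,T_+)$ with $T_+<\infty$, and derive an a priori bound $\sup_{t\in J_{T_+}}\|u(t)\|_{\E_\beta}<\infty$. The starting point is the representation formula for the generalized solution in the Definition, split into the two regimes $0\le a<t$ and $0\le t<a$. Because $m$ is non-negative or bounded, the exponential factors $e^{-\int_0^a m_{\bar u}(\dots)\rd s}$ and $e^{-\int_0^t m_{\bar u}(\dots)\rd s}$ are bounded by a constant (depending only on $T_+$), so they can be absorbed throughout. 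Hypotheses \eqref{i} and \eqref{ii} guarantee, via \cite[II.Cor.4.4.2]{LQPP}, that the evolution system $U_{A[\bar u]}(t,s)$ exists on $J_{T_+}$ and, together with the parabolic smoothing estimates for such systems, obeys bounds of the form $\|U_{A[\bar u]}(t,s)\|_{\ml(E_\gamma,E_\upsilon)}\le c\,(t-s)^{-(\upsilon-\gamma)}e^{\omega_0(t-s)}$ for $0\le \gamma\le \upsilon\le 1$, with constants uniform on $J_{T_+}$. Here it is important that \eqref{i}--\eqref{ii} are assumed to hold along the actual solution's interval $J_T$, so no circularity arises.

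The core estimate is then obtained by taking the $\E_\beta$-norm of the representation formula and integrating in $a$. For the part $t<a$ one estimates, using $\|U_{A[\bar u]}(t,0)\|_{\ml(E_\beta)}\le c\,e^{\omega_0 t}$,
\[
\int_t^\infty \|u(t,a)\|_{E_\beta}\,\g(a)\,\rd a
\,\le\, c\,e^{\omega_0 t}\int_t^\infty \|u^0(a-t)\|_{E_\beta}\,\g(a)\,\rd a
\,\le\, c\,g_1\,e^{\omega_0 t}\,\g(t)\,\|u^0\|_{\E_\beta}\,,
\]
where \eqref{a1111} was used to bound $\g(a)=\g((a-t)+t)\le g_1\g(a-t)\g(t)$; on $J_{T_+}$ the prefactor $e^{\omega_0 t}\g(t)$ is bounded. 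For the part $a<t$ one substitutes $r:=t-a$ and estimates
\[
\int_0^t \|u(t,a)\|_{E_\beta}\,\g(a)\,\rd a
\,\le\, c\int_0^t (t-r)^{-\beta}\,e^{\omega_0(t-r)}\,\|B[u](r)\|_{E_\vartheta}\,\g(t-r)\,\rd r\,,
\]
using the smoothing bound $\|U_{A[\bar u]}(t,r)\|_{\ml(E_\vartheta,E_\beta)}\le c\,(t-r)^{-(\beta-\vartheta)}e^{\omega_0(t-r)}$ together with $E_\vartheta\hookrightarrow E_\beta$ when $\vartheta\ge\beta$ and the smoothing when $\vartheta<\beta$; the condition $(\vartheta,\beta)\ne(0,1)$ is exactly what keeps the exponent $\beta-\vartheta<1$ so that $\int_0^t(t-r)^{-(\beta-\vartheta)}\rd r<\infty$. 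Now invoke \eqref{iii} to replace $\|B[u](r)\|_{E_\vartheta}$ by $c_1\bigl(1+\max_{0\le\tau\le r}\|u(\tau)\|_{\E_\beta}\bigr)$. Setting $\varphi(t):=\max_{0\le\tau\le t}\|u(\tau)\|_{\E_\beta}$, combining the two pieces yields
\[
\varphi(t)\,\le\, c_2\Bigl(1+\int_0^t (t-r)^{-(\beta-\vartheta)}\,\varphi(r)\,\rd r\Bigr)\,,\qquad t\in J_{T_+}\,,
\]
with $c_2=c_2(T_+)$ (in the case $\vartheta\ge\beta$ the kernel is simply bounded). A singular Gronwall lemma then gives $\varphi(t)\le C(T_+)$ for all $t\in J_{T_+}$, i.e. \eqref{global} holds on $[0,T_+)$, contradicting the maximality of $J$ unless $T_+=\infty$. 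Hence $J=\R^+$.

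The main obstacle I anticipate is making the evolution-system smoothing estimates genuinely uniform over the family $\{A[\bar u]\}$ along $J_{T_+}$ with constants depending only on $T_+$ (and on $\kappa,\omega$ from \eqref{i}): one needs that the H\"older constant in \eqref{ii} and the generator constants in \eqref{i} control the implied constants in the parabolic estimates, which is standard (see \cite{LQPP}) but must be cited carefully, especially since one cannot assume an a priori bound on $\|\bar u\|_{C^\theta(J_T,E_\alpha)}$ — only on the $C$-norm via $\varphi$. A minor secondary point is the bookkeeping of the weight $\g$: one must check that $e^{\omega_0 t}\g(t)$ and related quantities stay bounded on the bounded interval $J_{T_+}$, which follows from $\g\in L_{\infty,loc}^+(\R^+)$ and \eqref{a1111}. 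Given those, the singular Gronwall step is routine.
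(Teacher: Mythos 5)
Your proposal is correct and follows essentially the same route as the paper: reduce the exponential factors using that $m$ is non-negative or bounded, use \eqref{i}--\eqref{ii} to get uniform $\mathcal{L}(E_\beta)$ and $\mathcal{L}(E_\vartheta,E_\beta)$ smoothing bounds for $U_{A[\bar u]}$ on $J_T$ from \cite{LQPP}, handle the tail term with \eqref{a1111}, insert \eqref{iii}, and close with a singular Gronwall inequality for the running maximum $\varphi(t)=\max_{0\le\tau\le t}\|u(\tau)\|_{\E_\beta}$ before invoking the blow-up criterion \eqref{global}. The only cosmetic differences are that the paper works with an arbitrary $T>0$ rather than by contradiction, reduces to $\vartheta\le\beta$ and uses the kernel exponent $\beta-\vartheta/2$ instead of your $\beta-\vartheta$ (both are integrable precisely because $(\vartheta,\beta)\ne(0,1)$).
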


\begin{rem}\label{R4}
If the constant $c_0$ in \eqref{B} does not depend on $R$, then \eqref{iii} is a consequence of \eqref{B}. Also, condition \eqref{iii} may be replaced by 
\bqn\label{iiii}
\left\| B[u](t)\right\|_{E_\vartheta}\le c_2\big(1+\| u(t)\|_{\E_\upsilon}\big) \ ,\quad t\in J_T\ ,
\eqn
for $(0,1)\not=(\vartheta,\upsilon)\in [0,1]^2$ and some $c_2=c_2(T)>0$. This latter condition is slightly weaker than \eqref{iii} with respect to regularity since we may allow for $\upsilon>\beta$, but it somehow assumes $B$ to depend locally on $u$ with respect to time.
\end{rem}

For the proofs of Corollary \ref{C1}, Propositions \ref{P1}-\ref{P3}, and Remark \ref{R4} we refer to Section \ref{further proofs}.

\section{Proof of Theorem \ref{T}}\label{proof of theorem T}

\noindent Given the assumptions of Theorem \ref{T}, let $\gamma\in [\alpha,\beta)$ be arbitrary and choose $\theta\in (0,\zeta\wedge (\beta-\gamma))$. We fix any $T_0>0$ and $R$ with 
\bqn\label{5}
R>\left(1+g_1 \,\|\g \|_{L_\infty (0,T_0)}\right)\, \left\|u^0\right\|_{\E_\beta}\, \max_{0\le s\le t\le T_0}\| U_{A[0]}(t,s)\|_{\mathcal{L}(E_\beta,E_\gamma)}\ .
\eqn
For $T\in (0,T_0)$ set
\bqn\label{vv}
\mathcal{V}_T:=\big\{ u\in C(I_T,\E_\gamma)\, ;\, \| u(t)\|_{\E_\gamma}\le R+1\, ,\,\|\bar{u}(t)-\bar{u}(t_*)\|_{E_\gamma}\le \vert t-t_*\vert^\theta\,,\, 0\le t,t_*\le T\big\}\ ,
\eqn 
where
$$
\bar{v}:=\int_0^\infty v(a)\, \h(a)\ \rd a\ ,\quad v\in \E_0\ ,
$$
and observe that $\mathcal{V}_T$, equipped with the topology induced by $C(I_T,\E_\gamma)$, is a complete metric space. Also note that $(A_1)$ ensures the existence of a constant $c_1>0$ such that \bqn\label{66o}
0\le \h(a)\le c_1\g (a)\ ,\quad a\ge 0\ ,
\eqn
and hence
\bqn\label{66}
\|\bar{u}\|_{E_\vartheta}\,\le\, c_1\,\|u\|_{\E_\vartheta}\ ,\quad u\in\E_\vartheta\ ,\quad \vartheta\in [0,1]\ .
\eqn 
In particular, due to the embedding $E_\gamma\hookrightarrow E_\alpha$ there is a constant $c(R)>0$ for which
$$
\|\bar{u}\|_{C^\theta(I_T,E_\alpha)}\le c(R)\ ,\quad u\in\mathcal{V}_T\ ,
$$ 
and thus there are numbers $\rho\in (0,1)$, $\omega>0$, $\kappa\ge 1$, \mbox{$\sigma\in \R$}, and $c_0>0$ depending on $T_0$ and $R$ such that \eqref{t5}, \eqref{t6} hold for $u,u_*\in\mathcal{V}_T$. Therefore, invoking Lemma~II.5.1.3,
Lemma~II.5.1.4, and Equation~(II.5.3.8) in \cite{LQPP}, we conclude that there exists $c(T_0,R)>0$ such that unique evolution systems $U_{A[\bar{u}]}$ and $U_{A[\bar{u}_*]}$ on $E_0$ corresponding to any $u,u_*\in \mathcal{V}_T$ satisfy
    \bqn\label{y1}
    \|U_{A[\bar{u}]}(t,s)\|_{\mathcal{L}(E_\sigma)}\,+\,
    (t-s)^{\tau-\sigma}\|U_{A[\bar{u}]}(t,s)\|_{\mathcal{L}(E_\upsilon,
    E_\tau)}\, \le\, c(T_0,R)
    \eqn
for $0\le s<t\le T$ and $0\le \sigma <\upsilon\le \tau\le 1$,
     \bqn\label{y2}
    \|U_{A[\bar{u}]}(t,r)-U_{A[\bar{u}]}(s,r)\|_{\mathcal{L}(E_\tau,E_\upsilon)}\,
    \le\,  c(T_0,R)\, (t-s)^{\tau-\upsilon}
    \eqn
for $0\le r<s<t\le T$ and $0< \upsilon \le \tau < 1$, as well as
     \bqn\label{y3}
     \begin{split}
    \|U_{A[\bar{u}]}(t,s)-U_{A[\bar{u}_*]}(t,s)\|_{\mathcal{L}(E_\sigma,E_\upsilon)}\,&\le\,
    c(T_0,R)\, (t-s)^{\sigma-\upsilon}\,\|\bar{u}-\bar{u}_*\|_{C(I_T,E_\alpha)}\\
    &\le\,
    c(T_0,R)\, (t-s)^{\sigma-\upsilon}\,\|u-u_*\|_{\mathcal{V}_T}
    \end{split}
    \eqn
for $0\le s<t\le T$ and $0\le \sigma, \upsilon\le 1$ with $\sigma\not=
0$, $\upsilon\not= 1$.

Next, for $u,u_*\in\mathcal{V}_T$ we have $B[u]\in C(I_T,E_\mu)$ by $(A_4)$ with
\bqn\label{8}
\begin{split}
\| B[u]-B[u_*]\|_{C(I_T,E_\mu)}\,&\le\, c(T_0,R)\,\|u-u_*\|_{C(I_T,\E_\alpha)}\\
&\le\, c(T_0,R)\,\|u-u_*\|_{\mathcal{V}_T}\ ,
\end{split}
\eqn
whence
\bqn\label{8c}
\|B[u]\|_{C(I_T,E_\mu)}\,\le\, c(T_0,R)\ .
\eqn
Also note that, for $u,u_*\in\mathcal{V}_T$ and $t\in [0,T]$,
\bqn\label{9}
\begin{split}
\left\vert \int_0^a  m_{\bar{u}}  \right. &\left. (s+t-a,s)\rd s-\int_0^am_{\bar{u}_*}(s+t-a,s) \rd s \right\vert \\ 
&  + \left\vert    \int_0^t m_{\bar{u}}(s,s+b-t)\rd s -\int_0^t m_{\bar{u}_*}(s,s+b-t)\rd s \right\vert
\le\, c(T_0,R)\, \|u-u_*\|_{\mathcal{V}_T} 
\end{split}
\eqn
provided $0\le a\le t<b$. Defining $\Theta$ by
    \bqnn
     \Theta(u)(t,a)\, :=\, \left\{ \begin{aligned}
    &e^{-\int_0^am_{\bar{u}}(s+t-a,s)\rd s}\, U_{A[\bar{u}]}(t,t-a)\, B[u](t-a)\ , & 0\le a< t&\ ,\\
    & e^{-\int_0^t m_{\bar{u}}(s,s+a-t)\rd s}\, U_{A[\bar{u}]}(t,0)\, u^0(a-t)\ , & 0\le t<a&\ ,
    \end{aligned}
   \right.
    \eqnn 
for $0\le t\le T$, $a>0$, and $u\in\mathcal{V}_T$, we claim that $\Theta:\mathcal{V}_T\rightarrow \mathcal{V}_T$ is a contraction provided that $T=T(R)\in (0,T_0)$ is chosen sufficiently small. In the following, let $\bar{\mu}\in (0,\mu)$.

We first prove that $\Theta(u)\in C(I_T,\E_\beta)\hookrightarrow C(I_T,\E_\gamma)$ for $u\in\mathcal{V}_T$. To this end, observe that assumptions on $\g$ imply
\bqn\label{gg}
\g(a)\le g_1\, \|\g\|_{L_\infty(0,T_0)}\, \g(a-t)\ ,\quad a>t\ ,\quad 0\le t\le T\ . 
\eqn
Hence, recalling that $g\in L_{\infty,loc}(\R^+)$ and using \eqref{mm}, \eqref{y1}, \eqref{y2}, \eqref{8c}, and \eqref{gg} we estimate for $u\in\mathcal{V}_T$ and $0\le t\le t_*\le T$
\bqnn
\begin{split}
\| & \Theta(u)(t)-\Theta(u)(t_*)\|_{\E_\beta} \,\\ 
\le &\int_0^t \left\vert e^{-\int_0^am_{\bar{u}}(s+t_*-a,s)\rd s}-e^{-\int_0^am_{\bar{u}}(s+t-a,s)\rd s}\right\vert \left\| U_{A[\bar{u}]}(t_*,t_*-a)\right\|_{\mathcal{L}(E_\mu,E_\beta)}\\
&\qquad\qquad\qquad\qquad \times  \left\| B[u](t_*-a)\right\|_{E_\mu}\, \g(a)\,\rd a\\
& + \int_0^t\left\vert e^{-\int_0^am_{\bar{u}}(s+t-a,s)\rd s}\right\vert  \left\| \left[ U_{A[\bar{u}]}(t_*,t_*-a)-U_{A[\bar{u}]}(t,t-a)\right] B[u](t_*-a)\right\|_{E_\beta}\, \g(a)\,\rd a\\
&+ \int_0^t\left\vert e^{-\int_0^am_{\bar{u}}(s+t-a,s)\rd s}\right\vert  \left\|  U_{A[\bar{u}]}(t,t-a) \right\|_{\mathcal{L}(E_\mu,E_\beta)}\,  \left\|  B[u](t_*-a)- B[u](t-a)\right\|_{E_\mu}\, \g(a)\,\rd a\\
& + \int_t^{t_*}\left\vert e^{-\int_0^am_{\bar{u}}(s+t_*-a,s)\rd s}\right\vert  \left\|  U_{A[\bar{u}]}(t_*,t_*-a) \right\|_{\mathcal{L}(E_\mu,E_\beta)}\,  \left\|  B[u](t_*-a)\right\|_{E_\mu}\, \g(a)\,\rd a\\
&+ \int_t^{t^*}   \left\vert e^{-\int_0^t m_{\bar{u}}(s,s+a-t)\rd s}\right\vert\,        
  \left\| U_{A[\bar{u}]}(t,0)\right\|_{\mathcal{L}(E_\beta)} \,  \left\|u^0(a-t)\right\|_{E_\beta}\, \g(a)\,\rd a\\
&+ \int_{t^*}^\infty   \left\vert e^{-\int_0^{t_*} m_{\bar{u}}(s,s+a-t_*)\rd s}-e^{-\int_0^t m_{\bar{u}}(s,s+a-t)\rd s}\right\vert\,        
  \left\|  U_{A[\bar{u}]}(t_*,0) \right\|_{\mathcal{L}(E_\beta)}\,  \left\|  u^0(a-t_*)\right\|_{E_\beta}\, \g(a)\,\rd a\\
&+ \int_{t^*}^\infty   \left\vert e^{-\int_0^t m_{\bar{u}}(s,s+a-t)\rd s}\right\vert\,        
  \left\| \left[ U_{A[\bar{u}]}(t_*,0)-U_{A[\bar{u}]}(t,0)\right] \, u^0(a-t_*)\right\|_{E_\beta}\, \g(a)\,\rd a\\
&+ \int_{t^*}^\infty   \left\vert e^{-\int_0^t m_{\bar{u}}(s,s+a-t)\rd s}\right\vert\,        
  \left\| U_{A[\bar{u}]}(t,0)\right\|_{\mathcal{L}(E_\beta)} \, \left\| u^0(a-t_*)-u^0(a-t)\right\|_{E_\beta}\, \g(a)\,\rd a
\\
\le &\, 
c(T_0,R) \int_0^t \int_0^a \left\vert m_{\bar{u}}(s+t_*-a,s)-m_{\bar{u}}(s+t-a,s)\right\vert\rd s \, a^{\bar{\mu}-\beta}\,\rd a\\
& +c(T_0,R) \int_0^t\left\| \left[ U_{A[\bar{u}]}(t_*,t_*-a)-U_{A[\bar{u}]}(t,t-a)\right] B[u](t_*-a)\right\|_{E_\beta}\, \rd a\\
&+ c(T_0,R) \int_0^t a^{\bar{\mu}-\beta}\,  \left\|  B[u](t_*-a)- B[u](t-a)\right\|_{E_\mu}\, \rd a\\
&+ c(T_0,R)\int_t^{t^*}   a^{\bar{\mu}-\beta}\,\rd a\, +\, c(T_0,R)\int_0^{t_*-t}\|u^0(a)\|_{E_\beta}\, \g(a)\, \rd a \\
&+ c(T_0,R)\, \int_{0}^\infty \left\vert \int_0^{t_*} m_{\bar{u}}(s,s+a)\rd s-\int_0^t m_{\bar{u}}(s,s+a+t_*-t)\rd s \right\vert\, \left\|  u^0(a)\right\|_{E_\beta}\, \g(a)\,\rd a\\
&+ c(T_0,R)\int_{0}^\infty \left\| \left[ U_{A[\bar{u}]}(t_*,0)-U_{A[\bar{u}]}(t,0)\right]  u^0(a)\right\|_{E_\beta}\, \g(a)\,\rd a\\
&+ c(T_0,R) \int_0^\infty \left\| u^0(a)-u^0(a-t+t_*)\right\|_{E_\beta}\, \g(a)\,\rd a\\
=:&\, I+II+\ldots+VIII \ .
\end{split}
\eqnn
Now, as $\vert t-t_*\vert\rightarrow 0$ we clearly have $I+IV+V+VI\rightarrow 0$ due the Lebesgue Theorem (we obviously may assume $\beta\ge \mu$). Using $B[u]\in C(I_T,E_\mu)$, the density of the embedding $E_\beta\hookrightarrow E_\mu$, and the fact that the evolution system $U_{A[\bar{u}]}$ is uniformly strongly continuous on compact subsets of $E_\beta$, we also derive that $II\rightarrow 0$. The continuity of $B[u]$ also entails $III\rightarrow 0$, while the strong continuity of $U_{A[\bar{u}]}$ on $E_\beta$ ensures $VII\rightarrow 0$. Finally, $VIII\rightarrow 0$ holds since translations are strongly continuous. Therefore, $\Theta(u)\in C(I_T,\E_\beta)$.

Next observe that \eqref{y3} implies
\bqn\label{100}
\left\| U_{A[\bar{u}]}(t,s)\right\|_{\mathcal{L}(E_\beta,E_\gamma)}\,\le\, c(T_0,R)(t-s)^{\beta-\gamma}\,+\, c_2\ ,\quad 0\le s<t\le T\ ,\quad u\in\mathcal{V}_T\ ,
\eqn
where 
$$
c_2:=\max_{0\le s\le t\le T_0}\,\left\| U_{A[0]}(t,s)\right\|_{\mathcal{L}(E_\beta, E_\gamma)}\ .
$$
In view of \eqref{5}, \eqref{y1}, \eqref{8c}, \eqref{gg}, \eqref{100}, and assumption $(A_4)$ we deduce, for $u\in\mathcal{V}_T$ and $t\in I_T$, that
\bqnn
\begin{split}
\left\|\Theta(u)(t)\right\|_{\E_\gamma}\,&\le\, c(T_0,R)\int_0^t \left\| U_{A[\bar{u}]}(t,t-a)\right\|_{\mathcal{L}(E_\mu, E_\gamma)}\, \left\| B[u](t-a)\right\|_{E_\mu}\, \g(a)\ \rd a\\
&\quad +c(T_0,R)\int_t^\infty \left\| U_{A[\bar{u}]}(t,0)\right\|_{\mathcal{L}(E_\beta, E_\gamma)}\, \left\| u^0(a-t)\right\|_{E_\beta}\, \g(a)\ \rd a\\
&\le \, c(T_0,R)\, t^{1+\bar{\mu}-\gamma}\,+\, c(T_0,R)\, t^{\beta-\gamma}\,\left\| u^0\right\|_{\E_\beta}\,+\, c_2 \, g_1\,\| \g\|_{L_\infty(0,T_0)}\,\left\| u^0\right\|_{\E_\beta}\ .
\end{split}
\eqnn
Since $\gamma<\beta$ we may choose $T=T(R)\in (0,T_0)$ sufficiently small to obtain
\bqn\label{667}
\left\|\Theta(u)(t)\right\|_{\E_\gamma}\,\le\, R+1\ ,\quad t\in I_T\ ,\quad u\in \mathcal{V}_T\ .
\eqn
Moreover, writing for $u\in \mathcal{V}_T$ and $t\in I_T$
\bqn\label{f}
\begin{split}
\overline{\Theta(u)}(t)=\int_0^\infty \Theta(u)(t,a)\, \h(a)\ \rd a\,& =\int_0^t 
e^{-\int_0^{t-a} m_{\bar{u}}(a+s,s)\rd s}\,  U_{A[\bar{u}]}(t,a)\, B[u](a)\, \h(t-a)\ \rd a\\
&\quad +\int_0^\infty e^{-\int_0^{t} m_{\bar{u}}(s,a+s)\rd s}\, U_{A[\bar{u}]}(t,0)\, u^0(a)\, \h(a+t)\ \rd a
\end{split}
\eqn
and using the fact that, for $0\le a\le t\le t_*\le T$,
\bqnn
\begin{split}
\left\| U_{A[\bar{u}]}(t_*,a)-U_{A[\bar{u}]}(t,a)\right\|_{\mathcal{L}(E_\mu,E_\gamma)}\,&\le\,\left\| U_{A[\bar{u}]}(t_*,t)-U_{A[\bar{u}]}(t,t)\right\|_{\mathcal{L}(E_\beta,E_\gamma)} \, \left\| U_{A[\bar{u}]}(t,a)\right\|_{\mathcal{L}(E_\mu,E_\beta)}\\
&\le\, c(T_0,R)\, \vert t_*-t\vert^{\beta-\gamma}\, (t-a)^{\bar{\mu}-\beta}
\end{split}
\eqnn
by \eqref{y1} and \eqref{y2}, we derive from $(A_1)$, $(A_4)$, \eqref{5}, \eqref{66o}, \eqref{y1}, \eqref{y2}, and \eqref{8c} that, for $0\le t\le t_*\le T$,
\bqnn
\begin{split}
\big\|  &\overline{\Theta(u)}(t)-\overline{\Theta(u)}(t_*)\big\|_{\E_\gamma}\\
\,\le &\int_0^t \left\vert e^{-\int_0^{t_*-a} m_{\bar{u}}(a+s,s)\rd s}-e^{-\int_0^{t-a} m_{\bar{u}}(a+s,s)\rd s}\right\vert\,  \left\| U_{A[\bar{u}]}(t_*,a)\right\|_{\mathcal{L}(E_\mu,E_\gamma)}\,\left\| B[u](a)\right\|_{E_\mu}\, \h(t-a)\ \rd a\\
&\quad + c(T_0,R) \int_0^t \left\|  U_{A[\bar{u}]}(t_*,a)-U_{A[\bar{u}]}(t,a)\right\|_{\mathcal{L}(E_\mu,E_\gamma)}\,\left\| B[u](a)\right\|_{E_\mu}\, \h(t-a)\ \rd a\\
&\quad + c(T_0,R) \int_0^t \left\|  U_{A[\bar{u}]}(t,a)\right\|_{\mathcal{L}(E_\mu,E_\gamma)}\,\left\| B[u](a)\right\|_{E_\mu}\, \left\vert\h(t_*-a)-\h(t-a)\right\vert\ \rd a\\
&\quad + c(T_0,R) \int_t^{t_*} \left\|  U_{A[\bar{u}]}(t_*,a)\right\|_{\mathcal{L}(E_\mu,E_\gamma)}\,\left\| B[u](a)\right\|_{E_\mu}\, \left\vert\h(t-a)\right\vert\ \rd a\\
&\quad +\int_0^\infty \left\vert e^{-\int_0^{t_*} m_{\bar{u}}(s,a+s)\rd s}-e^{-\int_0^{t} m_{\bar{u}}(s,a+s)\rd s}\right\vert \, \left\| U_{A[\bar{u}]}(t_*,0)\right\|_{\mathcal{L}(E_\beta,E_\gamma)}\, \left\| u^0(a)\right\|_{E_\beta}\, \h(a+t_*)\ \rd a\\
&\quad +c(T_0,R)\int_0^\infty  \left\| U_{A[\bar{u}]}(t_*,0)-U_{A[\bar{u}]}(t,0)\right\|_{\mathcal{L}(E_\beta,E_\gamma)}\, \left\| u^0(a)\right\|_{E_\beta}\, \h(a+t_*)\ \rd a\\
&\quad +c(T_0,R)\int_0^\infty  \left\| U_{A[\bar{u}]}(t,0)\right\|_{\mathcal{L}(E_\beta,E_\gamma)}\, \left\| u^0(a)\right\|_{E_\beta}\, \vert \h(a+t_*)-\h(a+t)\vert\ \rd a\\
\le & c(T_0,R)\left\{\vert t_*-t\vert +\vert t_*-t\vert^{\beta-\gamma} +\int_0^t (t-a)^{\bar{\mu}-\gamma}\vert\h(t_*-a)-\h(t-a) \vert\rd a\right. \\
&\qquad\qquad\qquad\qquad +\vert t_*-t\vert^{1+\bar{\mu}-\gamma} +\vert t_*-t\vert + \vert t_*-t\vert^{\beta-\gamma}+ \vert t_*-t\vert^\zeta\bigg\}\ .
\end{split}
\eqnn
Taking into account that, due to $(A_1)$,
$$
\int_0^t (t-a)^{\bar{\mu}-\gamma}\vert\h(t_*-a)-\h(t-a) \vert\, \rd a =\int_0^t a^{\bar{\mu}-\gamma}\vert\h(t_*-t+a)-\h(a) \vert\, \rd a\le c(T_0)\,\vert t_*-t\vert^\zeta
$$
and recalling the choice of $\theta$, we may make $T=T(R)\in (0,T_0)$ smaller, if necessary, and conclude that
\bqn\label{h}
\big\|\overline{\Theta(u)}(t)-\overline{\Theta(u)}(t_*)\big\|_{\E_\gamma}\le \vert t_*-t\vert^\theta\ ,\quad 0\le t\le t_*\le T\ .
\eqn

To prove that $\Theta$ is contractive, we observe that assumption $(A_4)$ together with \eqref{5}, \eqref{y1}, \eqref{y3}, \eqref{8}, \eqref{8c}, and \eqref{9} imply that, for $u,u_*\in\mathcal{V}_T$, $0\le t\le T\le T_0$, and for all $\xi\in [0,\beta]$,
\bqnn
\begin{split}
\big\|  \Theta(u)(t)&-\Theta(u_*)(t)\big\|_{\E_\xi}\\
\,\le &\, c(T_0,R) \int_0^t \int_0^a \left\vert m_{\bar{u}}(t-a+s,s)- m_{\bar{u}_*}(t-a+s,s)\right\vert\,\rd s\,  \left\| U_{A[\bar{u}]}(t,t-a)\right\|_{\mathcal{L}(E_\mu,E_\xi)}\\
&\qquad\qquad\qquad\qquad\qquad\times\left\| B[u](t-a)\right\|_{E_\mu}\, \g(a)\ \rd a\\
&\quad + c(T_0,R) \int_0^t \left\|  U_{A[\bar{u}]}(t,t-a)-U_{A[\bar{u}_*]}(t,t-a)\right\|_{\mathcal{L}(E_\mu,E_\xi)}\,\left\| B[u](t-a)\right\|_{E_\mu}\, \g(a)\ \rd a\\
&\quad + c(T_0,R) \int_0^t \left\|  U_{A[\bar{u}]}(t,t-a)\right\|_{\mathcal{L}(E_\mu,E_\xi)}\,\left\| B[u](t-a)-B[u_*](t-a)\right\|_{E_\mu}\, \g(a)\ \rd a
\end{split}
\eqnn
\bqnn
\begin{split}
&\quad +c(T_0,R)\int_t^\infty  \int_0^t\left\vert  m_{\bar{u}}(s,a-t+s)\rd s- m_{{\bar{u}}_*}(s,a-t+s)\rd s\right\vert \, \left\| U_{A[\bar{u}]}(t,0)\right\|_{\mathcal{L}(E_\beta,E_\xi)}\\
&\qquad\qquad\qquad\qquad\qquad\times \left\| u^0(a-t)\right\|_{E_\beta}\, \g(a)\ \rd a\\
&\quad +c(T_0,R)\int_t^\infty  \left\| U_{A[\bar{u}]}(t,0)-U_{A[\bar{u}_*]}(t,0)\right\|_{\mathcal{L}(E_\beta,E_\xi)}\, \left\| u^0(a-t)\right\|_{E_\beta}\, \g(a)\ \rd a\\
\le&\, c(T_0,R)\,\left\|u-u_*\right\|_{\mathcal{V}_T}\,\big\{t^{1+\bar{\mu}-\xi}+t^{1+\mu-\xi}+
t^{1+\bar{\mu}-\xi}+t+t^{\beta-\xi}\big\}\ ,
\end{split}
\eqnn
that is 
\bqn\label{theta}
\big\|  \Theta(u)(t)-\Theta(u_*)(t)\big\|_{\E_\xi}\,\le\,c(T_0,R)\,\big(t^{\bar{\mu}}+t^{\beta-\xi}\big)\, \left\|u-u_*\right\|_{\mathcal{V}_T}\ ,\quad t\in I_T\ .
\eqn
In particular, taking $\xi=\gamma<\beta$ we may choose $T=T(R)\in (0,T_0)$ sufficiently small such that
$$
\big\|  \Theta(u)-\Theta(u_*)\big\|_{\mathcal{V}_T} \le \,\frac{1}{2}\, \left\|u-u_*\right\|_{\mathcal{V}_T}\ ,\quad u,u_*\in\mathcal{V}_T\ .
$$
Therefore, $\Theta: \mathcal{V}_T\rightarrow \mathcal{V}_T$ is a contraction provided $T=T(R)\in (0,T_0)$ is sufficiently small and hence possesses a unique fixed point, say $u$, in $\mathcal{V}_T\cap C(I_T,\E_\beta)$. Consequently,
    \bqn\label{uu}
     u(t,a)\, 
     =\, \left\{ \begin{aligned}
    &e^{-\int_0^am_{\bar{u}}(s+t-a,s)\rd s}\, U_{A[\bar{u}]}(t,t-a)\, B[u](t-a)\ , & 0\le a< t&\ ,\\
    & e^{-\int_0^t m_{\bar{u}}(s,s+a-t)\rd s}\, U_{A[\bar{u}]}(t,0)\, u^0(a-t)\ , & 0\le t<a&\ ,
    \end{aligned}
   \right.
    \eqn
for $0\le t\le T$ and $a>0$. An estimate similar to \eqref{667} combined with the strong continuity properties of the evolution system $U_{A[\bar{u}]}$ then warrants that
\bqn\label{668}
u\in C_{\upsilon-\beta}\big( (0,T],\E_\upsilon\big)\ ,\quad \beta \le \upsilon\le 1\ .
\eqn
In order to extend the just found solution $u\in C(I_T,\E_\beta)$, we choose
\bqn\label{55}
R>\left(1+g_1 \,\|\g \|_{L_\infty (0,T_0)}\right)\, \|u\|_{C(I_T,\E_\beta)}\, \max_{0\le s\le t\le T_0}\| U_{A[0]}(t,s)\|_{\mathcal{L}(E_\beta,E_\gamma)}\ .
\eqn
similarly as in \eqref{5}, and take now $\mathcal{V}_S$ to be
$$
\mathcal{V}_S:=\big\{ v\in C(I_S,\E_\gamma)\, ;\, \| v(t)\|_{\E_\gamma}\le R+1\, ,\,\|\bar{v}(t)-\bar{v}(t_*)\|_{E_\gamma}\le \vert t-t_*\vert^\theta\,,\, 0\le t,t_*\le S\,,\, v(0)=u(T)\big\}\ ,
$$
for $S>0$ with $T+S\le T_0$. Given $v\in\mathcal{V}_S$, we put
$$
V(t):=\left\{\begin{array}{ll} u(t)\ , & 0\le t\le T\ ,\\ v(t-T)\ , & T\le t\le T+S\ ,
\end{array}\right.
$$
and obtain $V\in C(I_{T+S},\E_\gamma)$ with $\|\bar{V}\|_{C^\theta(I_{T+S},E_\gamma)}\le R+2$. We then introduce
$$
\hat{A}[\bar{v}](t):=A[\bar{V}](t+T)\ ,\quad \hat{B}[v](t):=B[V](t+T)\ ,\quad \hat{m}_{\bar{v}}(t,a):=m(t+T,a,\bar{v}(t))
$$
for $t\in I_S$ and $a>0$. It follows from assumption $(A_2)$ that
$$
\sigma+\hat{A}[\bar{v}]\in C\big(I_S,\mathcal{H}(E_1,E_0;\kappa,\omega)\big)
$$ 
and 
$$
\hat{A}[\bar{v}]\in C^\rho\big(I_S,\mathcal{L}(E_1,E_0)\big)\quad \text{with}\quad  \|\hat{A}[\bar{v}]\|_{ C^\rho(I_S,\mathcal{L}(E_1,E_0))}\le c(T_0,R)
$$
for some $\sigma$, $\omega$, $\kappa$, $\rho$ depending on $R$ and $T_0$.
If also $v_*\in\mathcal{V}_S$, then
$$
\big\| \hat{A}[\bar{v}]-\hat{A}[\bar{v}_*]\big\|_{C(I_S,\mathcal{L}(E_1,E_0))}\le c(T_0,R)\, \| \bar{v}-\bar{v}_*\|_{C(I_S,E_\alpha)}\ .
$$
Hence $\hat{A}$ satisfies \eqref{t5} and \eqref{t6}. Moreover, for $v, v_*\in\mathcal{V}_S$ we also have
$$
\big\| \hat{B}[v]-\hat{B}[v_*]\big\|_{C(I_S,E_\mu)}\le c(T_0,R)\, \| v-v_*\|_{C(I_S,\E_\alpha)}
$$
by $(A_3)$, that is, $\hat{B}$ satisfies \eqref{B}. Taking $S=S(R)>0$ sufficiently small we deduce as before the existence of a function $v\in C(I_S,\E_\beta)$ with
 \bqn\label{v}
     v(t,a)\, 
     =\, \left\{ \begin{aligned}
    &e^{-\int_0^a\hat{m}_{\bar{v}}(s+t-a,s)\rd s}\, U_{\hat{A}[\bar{v}]}(t,t-a)\, \hat{B}[v](t-a)\ , & 0\le a< t&\ ,\\
    & e^{-\int_0^t \hat{m}_{\bar{v}}(s,s+a-t)\rd s}\, U_{\hat{A}[\bar{v}]}(t,0)\, u(T,a-t)\ , & 0\le t<a&\ ,
    \end{aligned}
   \right.
    \eqn
for $0\le t\le S$ and $a>0$. We then extend the function $u$ by $w:I_{T+S}\rightarrow\E_\beta$ being defined as
$$
w(t):=\left\{\begin{array}{ll} u(t)\ , & 0\le t\le T\ ,\\ v(t-T)\ , & T\le t\le T+S\ .
\end{array}\right.
$$
Clearly, owing to $w\big\vert_{I_T}=u$ we infer $B[w]\big\vert_{I_T}=B[u]$ and $A[\bar{w}]\big\vert_{I_T}=A[\bar{u}]$ from assumptions $(A_2)$, $(A_3)$. Consequently, $U_{A[\bar{w}]}(t,s)=U_{A[\bar{u}]}(t,s)$ for $0\le s\le t\le T$. Hence the function $w$ still satisfies \eqref{uu} in which $u$ is replaced by $w$ everywhere. Next, since
$$
\hat{A}[\bar{v}](t-T)=A[\bar{w}](t)\ ,\quad T\le t\le T+S\ ,
$$
we have by uniqueness
$$
U_{\hat{A}[\bar{v}]}(t-T,s)=U_{A[\bar{w}]}(t,s+T)\ ,\quad 0\le s\le t-T\le S\ .
$$
Furthermore,
$$
\hat{B}[v](t-T-a)=B[w](t-a)\ ,\quad T\le t\le T+S\ ,\quad 0\le t-T-a\ .
$$
From these observations and using \eqref{uu} and \eqref{v} it is then straightforward that
 \bqnn
     w(t,a)=v(t-T,a)\, 
     =\, \left\{ \begin{aligned}
    &e^{-\int_0^am_{\bar{w}}(s+t-a,s)\rd s}\, U_{A[\bar{w}]}(t,t-a)\, B[w](t-a)\ , & 0\le a< t&\ ,\\
    & e^{-\int_0^t m_{\bar{w}}(s,s+a-t)\rd s}\, U_{A[\bar{w}]}(t,0)\, u^0(a-t)\ , & 0\le t<a&\ ,
    \end{aligned}
   \right.
    \eqnn
for $T\le t\le T+S$. Therefore, we may extend $u$ to a unique maximal generalized solution $u$ in $C(J,\E_\beta)$ satisfying \eqref{uu} for $t\in J$ and $a>0$. Clearly, the maximal interval of existence, $J$, is open in $[0,\infty)$. If \eqref{global} holds true, then \eqref{55} and the above extension procedure yield $J=\R^+$. Obviously, \eqref{668} holds for any $T\in \dot{J}$. Moreover, proceeding as in \eqref{h} shows that for any function $\tilde{h}$ satisfying $(A_1)$ we have
$$
\int_0^\infty u(\cdot,a)\,\tilde{\h}(a)\, \rd a \in C^{\zeta\wedge (\gamma-\beta)}(J,E_\gamma)
$$ 
for $\gamma\in [\alpha,\beta)$. Taking $\h\equiv 1$, \eqref{f} reads
\bqnn
\begin{split}
\int_0^\infty u(t,a)\ \rd a\, &=\int_0^t U_{A[\bar{u}]}(t,a)\,
e^{-\int_0^{t-a} m_{\bar{u}}(a+s,s)\rd s}\,  B[u](a)\ \rd a\\
&\quad+U_{A[\bar{u}]}(t,0)\, \int_0^\infty e^{-\int_0^{t} m_{\bar{u}}(s,a+s)\rd s}\, u^0(a)\ \rd a
\ .
\end{split}
\eqnn
The right hand side is clearly differentiable with respect to $t$ and, owing to \eqref{uu} and  \eqref{668} with $\upsilon=1$, we derive
$$\dfrac{\rd }{\rd t}\int_0^\infty u(t,a)\, \rd a= -A[\bar{u}](t)\int_0^\infty u(t,a)\, \rd a+B[u](t)-\int_0^\infty m_{\bar{u}}(t,a)\, u(t,a)\, \rd a
$$
from which we conclude \eqref{dd} by invoking \cite[II.Thm.1.2.2]{LQPP}. This proves Theorem \ref{T}

\section{Proof of Further Properties}\label{further proofs}

\noindent For the remainder of this section, we fix $u^0\in\E_\beta$ and let $u=u(\cdot;u^0)\in C(J,\E_\beta)\cap C_{\upsilon-\beta}((0,T],\E_\upsilon)$ for $T\in \dot{J}$ and $\beta\le \upsilon\le 1$ denote the unique maximal generalized solution to \eqref{1}-\eqref{4} on $J=J(u^0)$ corresponding to $u^0$. 

\subsection{Proof of Corollary \ref{C1}}\label{proof cor C1}

We use the notation as in the proof of Theorem \ref{T}. Given $u^0\in\E_\beta$ it is clear that we may choose $\delta>0$ such that \eqref{5} holds true if $u^0$ therein is replaced by any $u_*^0\in \E_\beta$ with $ \|u^0-u_*^0 \|_{\E_\beta}\le \delta$. Therefore, the proof of Theorem \ref{T} shows that there are solutions $u=u(\cdot;u^0)$ and $u_*=u(\cdot;u_*^0)$ both belonging to $\mathcal{V}_T$, where $T=T(R)\in (0,T_0)$ is sufficiently small. As in \eqref{theta} we obtain for any $\bar{\mu}\in (0,\mu)$, $\xi\in [0,\beta]$, and $t\in [0,T]$ 
\bqnn
\begin{split}
\| u(t)-u_*(t)\|_{\E_\xi}\,& \le\, c(T_0,R)\,\left(t^{\bar{\mu}}+t^{\beta-\xi}\right)\,\|u-u_*\|_{\mathcal{V}_T}\\
&\qquad +c(T_0,R)\int_t^\infty         
  \left\| U_{A[\bar{u}_*]}(t,0)\right\|_{\mathcal{L}(E_\beta,E_\xi)} \left\| u^0(a-t)-u_*^0(a-t)\right\|_{E_\beta} \g(a)\,\rd a\\
&\le\, c(T_0,R)\,\left(t^{\bar{\mu}}+t^{\beta-\xi}\right)\,\|u-u_*\|_{\mathcal{V}_T}\,+\, c(T_0,R)\,\left\|u^0-u_*^0\right\|_{\E_\beta}\ .
\end{split}
\eqnn
Hence, by taking $\xi=\gamma<\beta$ and making $T=T(R)\in (0,T_0)$ smaller if necessary, we first deduce
$$ 
\| u-u_*\|_{\mathcal{V}_T}\,\le\, c(T_0,R)\ \left\| u^0-u_*^0\right\|_{\E_\beta}
$$
and then, choosing $\xi=\beta$,
$$
\| u(t)-u_*(t)\|_{\E_\beta}\,\le\, c(T_0,R)\, \left\| u^0-u_*^0\right\|_{\E_\beta}\ ,\quad t\in [0,T]\ ,
$$
whence the claim of Corollary \ref{C1}.

\subsection{Proof of Proposition \ref{P1}}\label{proof prop P1}

To establish Proposition \ref{P1} we use the properties of evolution systems \cite{LQPP}
$$
\frac{\partial}{\partial t}U_{A[\bar{u}]}(t,s)w\, =\, -A[\bar{u}](t) U_{A[\bar{u}]}(t,s)w\ ,\quad 0\le s< t\in J\ ,\quad w\in E_0\ ,
$$
and
$$
\frac{\partial}{\partial s}U_{A[\bar{u}]}(t,s)v\, =\,  U_{A[\bar{u}]}(t,s)A[\bar{u}](s) v\ ,\quad 0\le s< t\in J\ ,\quad v\in E_1\ .
$$
Then, due to \eqref{P10} and \eqref{P11}, it follows from \eqref{uu} that, for $t\in\dot{J}$ and $a>0$ with $a\not=t$,
 \bqnn
    \begin{split}
    \partial_t u(t,a)\,=\,& {\bf 1}_{[a<t]}(t,a)\left\{e^{-\int_{t-a}^t  m_{\bar{u}}(s,s+a-t)\, \rd s}  U_{A[\bar{u}]}(t,t-a)\big(A[\bar{u}](t-a)+\partial_t\big)\, B[u](t-a)\right. \\
    &\quad +  \left(-m_{\bar{u}}(t,a)+ m_{\bar{u}}(t-a,0)+ \int_{t-a}^t \partial_2 m_{\bar{u}}(s,s+a-t)\, \rd a- A[\bar{u}](t)\right) u(t,a)\bigg\}\\
   &+ {\bf 1}_{[a>t]}(t,a)\left\{ \left(-m_{\bar{u}}(t,a)+ \int_{0}^t \partial_2 m_{\bar{u}}(s,s+a-t)\, \rd a- A[\bar{u}](t)\right)\, u(t,a)\right. \\
    &\qquad - e^{-\int_{0}^t  m_{\bar{u}}(s,s+a-t)\, \rd s}  U_{A[\bar{u}]}(t,0)\partial_a u^0(a-t)\bigg\} 
    \end{split}
    \eqnn
if $m\in C^{0,1}(J\times\R^+)$ and similarly    
 \bqnn
    \begin{split}
    \partial_a u(t,a)\,=\,& {\bf 1}_{[a<t]}(t,a)\left\{ \left(- m_{\bar{u}}(t-a,0)- \int_{t-a}^t \partial_2 m_{\bar{u}}(s,s+a-t)\, \rd a\right)\, u(t,a)\right. \\
    &\qquad + e^{-\int_{t-a}^t  m_{\bar{u}}(s,s+a-t)\, \rd s}  U_{A[\bar{u}]}(t,t-a)\big(-A[\bar{u}](t-a)-\partial_t\big)\, B[u](t-a)\bigg\}\\
   &+ {\bf 1}_{[a>t]}(t,a)\left\{ \left( \int_{0}^t \partial_2 m_{\bar{u}}(s,s+a-t)\, \rd a- A[\bar{u}](t)\right)\, u(t,a)\right. \\
    &\qquad + e^{-\int_{0}^t  m_{\bar{u}}(s,s+a-t)\, \rd s}  U_{A[\bar{u}]}(t,0)\partial_a u^0(a-t)\bigg\}\ .
    \end{split}
    \eqnn
Taking into account the particular form of $u$ in \eqref{uu}, the assumptions on $B$ and $u^0$, and the continuity properties of $U_{A[\bar{u}]}$ we deduce that $u$ indeed possesses the regularity \eqref{r1}, \eqref{r2} and satisfies
$$
\big(\partial_t +\partial_a\big) u(t,a) =-A[\bar{u}](t) u(t,a)-m_{\bar{u}}(t,a)u(t,a)\quad\text{in}\quad E_0$$
for $t\in \dot{J}$ and $a>0$ with $t\not=a$. Clearly,
$$ u(t,0)=B[u](t)\ ,\quad t\in \dot{J}\ , \qquad u(0,a)=u^0(a)\ ,\quad a>0\ ,
$$
where both equations hold in $E_0$. This proves Proposition \ref{P1}.

\subsection{Proof of Proposition \ref{P2}}\label{proof prop P2}

Suppose the assumptions of Proposition \ref{P2}. Replacing $\mathcal{V}_T$ in the proof of Theorem \ref{T} by the closed metric space
\bqnn
\mathcal{V}_T^+:=\big\{ v\in C(I_T,\E_\gamma^+)\, ;\, \| v(t)\|_{\E_\gamma}\le R+1\, ,\,\|\bar{v}(t)-\bar{v}(t_*)\|_{E_\gamma}\le \vert t-t_*\vert^\theta\,,\, 0\le t,t_*\le T\big\}\ ,
\eqnn 
and using the fact that the resolvent positivity of the operator $A$ implies  
$$
U_{A[\bar{v}]}(t,s): E_0^+\rightarrow E_0^+\ ,\quad 0\le s\le t\le T\ ,\quad v\in \mathcal{V}_T^+\ ,
$$
by \cite[II.§6.4]{LQPP}, it follows from the assumptions on $B$ that the map $\Theta$, introduced in the proof of Theorem~\ref{T}, is a contraction from $\mathcal{V}_T^+$ into itself (provided $T$ is chosen sufficiently small). This then readily gives Proposition \ref{P2}.

\subsection{Proof of Proposition \ref{P3}}\label{proof prop P3}

Suppose the assumptions of Proposition \ref{P3} and let $T>0$ be arbitrary. Observe that we may assume without loss of generality in \eqref{iii} that $\vartheta\le \beta$. Then \eqref{i}, \eqref{ii} in combination with \cite[II.Lem.5.1.3]{LQPP} ensure the existence of  a constant $c_3=c_3(T)>0$ such that
\bqn\label{m1}
\left\| U_{A[\bar{u}]}(t,s)\right\|_{\mathcal{L}(E_\beta)}\,+\, (t-s)^{\beta- \vartheta/2}\, \left\| U_{A[\bar{u}]}(t,s)\right\|_{\mathcal{L}(E_\vartheta,E_\beta)}\,\le\, c_3\ ,\quad 0\le s<t\in J_T\ .
\eqn
Introducing $z\in C(J_T)$ by
$$
z(\tau):=\max_{0\le t\le\tau}\| u(t)\|_{\E_\beta}\ ,\quad \tau\in J_T\ ,
$$
it follows from \eqref{iii}, \eqref{m1}, and the assumption that $m$ is non-negative or bounded
\bqn
\begin{split}
\| u(t)\|_{\E_\beta} \, &\le c(T) \int_0^t \left\| U_{A[\bar{u}]}(t,t-a)\right\|_{\mathcal{L}(E_\vartheta,E_\beta)}\,\left\| B[u](t-a)\right\|_{E_\vartheta}\,\g (a)\ \rd a\\
&\qquad +c(T) \int_t^\infty  \left\| U_{A[\bar{u}]}(t,0)\right\|_{\mathcal{L}(E_\beta)}\,\left\| u^0(a-t)\right\|_{E_\beta}\,\g (a)\ \rd a\notag\\
&\le c(T) c_1 c_3 \|\g \|_{L_\infty (J_T)}\int_0^t a^{-\beta+\vartheta/2}\,\big(1+z(t-a)\big)\ \rd a\,+\, c(T) g_1 c_3 \|\g \|_{L_\infty (J_T)}\,\left\| u^0\right\|_{\E_\beta} \label{m3}
\end{split}
\eqn
for $t\in J_T$ and thus, since $z$ is non-decreasing,
$$
z(\tau)\,\le\, c(T)\left( 1+\int_0^\tau (\tau-a)^{-\beta+\vartheta/2}\, z(a)\ \rd a\right)\ ,\quad \tau\in J_T\ .
$$
Due to $\beta-\vartheta/2<1$, Gronwall's inequality implies \eqref{global}, hence $J=\R^+$.\\

\subsection{Proof of Remark \ref{R4}}\label{proof rem r4} 
We note that if \eqref{iii} is replaced with \eqref{iiii}, we clearly may assume that $\upsilon\in [\beta,1)$. Then, as in the proof of Proposition \ref{P3} we obtain from \eqref{iiii} and the analogue of \eqref{m1}
\bqnn
\begin{split}
\| u(t)\|_{\E_\upsilon} \, 
&\le c(T) c_2 c_3 \|\g \|_{L_\infty (J_T)}\int_0^t a^{-\upsilon+\vartheta/2}\,\big(1+\|u(t-a)\|_{\E_\upsilon}\big)\ \rd a\\
&\qquad +c(T) g_1 c_2 \|\g \|_{L_\infty (J_T)}\,\left\| u^0\right\|_{\E_\beta} \, t^{-\upsilon+\beta/2}\\
&\le c(T)\left(1+t^{-\upsilon+\beta/2}+\int_0^t (t-a)^{-\upsilon+\vartheta/2}\| u(a)\|_{\E_\upsilon}\ \rd a\right)
\end{split}
\eqnn
for $t\in \dot{J}_T$. Applying the singular Gronwall inequality \cite[II.Cor.3.3.2]{LQPP}, we deduce \eqref{global}.

\section{Applications}\label{examples}

We give examples of problems to which the results of Section \ref{main result} may be applied. First we provide some conditions intended to simplify the verification of assumption $(A_2)$ and \eqref{i}, \eqref{ii}.

\subsection{General Remarks}

We show that if $A$ has a particular form, then assumption $(A_2)$ is rather easy to verify in concrete applications. This result, in particular, applies to the case when $A$ depends locally with respect to time on $\bar{u}$.

More precisely, we assume that $A$ is of the form
\bqn\label{local}
A[\bar{z}](t)=A_0\big(t,\Phi(\bar{z})(t)\big)\ ,
\eqn
where 
\bqn\label{A0}
A_0\in C_b^{\varrho,1-}\big(\R^+\times F_0, \mathcal{H}(E_1,E_0)\big)
\eqn
for some Banach space $F_0$ and $\varrho\in (0,1)$. That is, given any $R>0$ there exists $c(R)>0$ such that
$$
\|A_0(t,z)- A_0(t_*,z_*)\|_{\mathcal{H}(E_1,E_0)}\,\le\, c(R)\,\big(\vert t-t_*\vert^\varrho+\|z-z_*\|_{F_0}\big)
$$
for $t,t_*\in [0,R]$ and $z,z_*\in F_0$ with $\|z\|_{F_0}, \| z_*\|_{F_0}\le R$.

Given another Banach space $F_1$ with $F_1\hookrightarrow F_0$, the function $\Phi$ is supposed to satisfy the following conditions (for some $\alpha\in [0,1)$):
\begin{itemize}
\item[$(A_5)$] Given $T_0, R>0$ and $\theta\in (0,1)$ there are numbers $\rho\in (0,1)$ and \mbox{$c_4>0$} (depending on $T_0$, $R$, and $\theta$) such that, for each $T\in (0,T_0)$, the function $\Phi$ maps $C^\theta(I_T,E_\alpha)$ into $C^\rho(I_T,F_1)$
and satisfies
\bqnn
\left\| \Phi(\bar{z})(t)-\Phi(\bar{z})(t_*)\right\|_{F_1}\,\le c_4\, \vert t-t_*\vert^\rho
\eqnn
and
\bqnn
\left\| \Phi(\bar{z})(t)-\Phi(\bar{z}_*)(t)\right\|_{F_1}\,\le\,c_4\, \|\bar{z}-\bar{z}_*\|_{C(I_T,E_\alpha)}
\eqnn
for $t,t_*\in I_T$ and all $\bar{z},\bar{z}_*\in C^\theta(I_T,E_\alpha)$ with $\|\bar{z}\|_{C^\theta(I_T,E_\alpha)}\le R$ and $\|\bar{z}_*\|_{C^\theta(I_T,E_\alpha)}\le R$.
Moreover, if $0<T<S$ and $\bar{z},\bar{z}_*\in C(I_S,E_{\alpha})$ with $\bar{z}\big\vert_{I_T}=\bar{z}_*\big\vert_{I_T}$, then $\Phi(\bar{z})\big\vert_{I_T}=\Phi(\bar{z}_*)\big\vert_{I_T}$. 
\end{itemize}

Then we have:

\begin{prop}\label{ex1}
Suppose that the embedding $F_1\hookrightarrow F_0$ is compact and let the operator $A$ be of the form \eqref{local} with $A_0$ satisfying \eqref{A0} and $\Phi$ satisfying assumption $(A_5)$. Then $A$ satisfies assumption $(A_2)$. 
\end{prop}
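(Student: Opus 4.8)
The plan is to verify the three conditions making up $(A_2)$ for $A[\bar z](t) = A_0(t,\Phi(\bar z)(t))$, using the composition of the regularity of $A_0$ from \eqref{A0} with the regularity of $\Phi$ from $(A_5)$, and extracting the uniformity in $\kappa, \omega$ from a compactness argument. First I would fix $T_0, R > 0$ and $\theta \in (0,1)$, and invoke $(A_5)$ to obtain $\rho \in (0,1)$ and $c_4 > 0$ so that $\Phi$ maps $C^\theta(I_T, E_\alpha)$ into $C^\rho(I_T, F_1)$ with the stated bounds. For any $\bar z$ with $\|\bar z\|_{C^\theta(I_T,E_\alpha)} \le R$, the curve $t \mapsto \Phi(\bar z)(t)$ then lies in a fixed bounded subset $\mathcal{K}$ of $F_1$, namely a ball of radius depending only on $c_4$, $T_0$, $R$, and $\|\Phi(\bar z)(0)\|_{F_1}$ (which is itself controlled since $(A_5)$ gives Hölder continuity starting from $t=0$). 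Composing the $\rho$-Hölder map $t \mapsto \Phi(\bar z)(t)$ with the $C_b^{\varrho, 1-}$ map $A_0$ yields $A[\bar z] \in C^{\rho' }(I_T, \mathcal{L}(E_1, E_0))$ with $\rho' = \rho \wedge \varrho$ (or rather $\rho\varrho$ if one is careful, but $\rho\wedge\varrho$ after possibly shrinking $\rho$) and a norm bound depending only on $T_0$, $R$; this gives the second half of \eqref{t5}. The Lipschitz-in-$\bar z$ estimate \eqref{t6} follows directly: $\|A[\bar z](t) - A[\bar z_*](t)\|_{\mathcal{L}(E_1,E_0)} \le c(R)\|\Phi(\bar z)(t) - \Phi(\bar z_*)(t)\|_{F_0} \le c(R)\|\Phi(\bar z)(t)-\Phi(\bar z_*)(t)\|_{F_1} \le c(R)c_4\|\bar z - \bar z_*\|_{C(I_T,E_\alpha)}$, using $F_1 \hookrightarrow F_0$. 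The last (causality) assertion of $(A_2)$ is immediate from the corresponding causality property of $\Phi$ in $(A_5)$ and the pointwise form of \eqref{local}.

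The main obstacle is the first assertion of \eqref{t5}, namely producing a \emph{single} pair $(\kappa, \omega)$ — and the shift $\sigma$ — that works uniformly for \emph{all} admissible $\bar z$. This is exactly where the compactness of $F_1 \hookrightarrow F_0$ enters. The point is that $\mathcal{H}(E_1, E_0)$ is an open subset of $\mathcal{L}(E_1,E_0)$, and for each fixed $z_0 \in F_0$ the operator $A_0(t, z_0)$ lies in some $\mathcal{H}(E_1, E_0; \kappa(t,z_0), \omega(t,z_0))$; moreover, since membership in $\mathcal{H}(E_1,E_0;\kappa',\omega')$ persists under small $\mathcal{L}(E_1,E_0)$-perturbations (with slightly worse constants), the continuity of $A_0$ in \eqref{A0} makes $(t,z) \mapsto$ "local constants" locally bounded on $\R^+ \times F_0$. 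Now the set $\widehat{\mathcal{K}} := \{\Phi(\bar z)(t) : t \in I_{T_0},\ \|\bar z\|_{C^\theta(I_{T_0}, E_\alpha)} \le R\}$ is relatively compact in $F_0$: each such curve $\Phi(\bar z)(\cdot)$ has values in a fixed bounded set of $F_1$, which is precompact in $F_0$ by the compact embedding. Hence the closure of $I_{T_0} \times \widehat{\mathcal{K}}$ is compact in $\R^+ \times F_0$, and on it the map $A_0$ takes values in a compact subset of the open set $\mathcal{H}(E_1,E_0)$; such a compact subset is contained in $\mathcal{H}(E_1, E_0; \kappa, \omega)$ for some fixed $\kappa \ge 1$, $\omega > 0$ (this is a standard consequence of the openness and the explicit form of the defining inequalities — one covers the compact set by finitely many $\mathcal{H}(E_1,E_0;\kappa_i,\omega_i)$-neighborhoods and takes $\kappa = \max_i \kappa_i' $, $\omega = \max_i \omega_i'$ with the perturbed constants). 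Taking $\sigma := \omega$ then gives $\sigma + A[\bar z](t) = \omega + A_0(t, \Phi(\bar z)(t)) \in \mathcal{H}(E_1, E_0; \kappa, \omega)$ — actually I would phrase it so that the resulting operator genuinely lies in the class with the \emph{stated} $\omega$, which may require absorbing $\sigma$ as in the paper's convention — and the continuity $t \mapsto \sigma + A[\bar z](t)$ in $\mathcal{H}(E_1, E_0; \kappa, \omega)$ follows from the continuity of $A_0$ composed with that of $\Phi(\bar z)(\cdot)$, since $\mathcal{H}(E_1,E_0;\kappa,\omega)$ is closed in $\mathcal{L}(E_1,E_0)$ (so a continuous $\mathcal{L}$-valued curve staying in it is continuous into it).

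I would organize the write-up as: (1) apply $(A_5)$, record the resulting $\rho, c_4$, and note the curves $\Phi(\bar z)(\cdot)$ are uniformly bounded in $C^\rho(I_{T_0}, F_1)$, hence take values in a fixed bounded ball $\mathcal{B}_{F_1}$; (2) use compactness of $F_1 \hookrightarrow F_0$ to conclude $\overline{I_{T_0} \times \mathcal{B}_{F_1}}$ is compact in $\R^+ \times F_0$, deduce via \eqref{A0} that $A_0$ maps it into a compact subset of $\mathcal{H}(E_1,E_0)$, and extract uniform $\kappa, \omega$ (and set $\sigma := \omega$); (3) verify $\sigma + A[\bar z] \in C(I_T, \mathcal{H}(E_1, E_0; \kappa, \omega))$ and the $C^\rho$-bound \eqref{t5} by composing Hölder/continuous maps; (4) derive \eqref{t6} from the Lipschitz-in-$z$ part of \eqref{A0} and the Lipschitz-in-$\bar z$ part of $(A_5)$, together with $F_1 \hookrightarrow F_0$; (5) dispose of the causality clause. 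The only genuinely delicate point is step (2)'s extraction of uniform constants over a compact subset of $\mathcal{H}(E_1,E_0)$; everything else is a routine chaining of estimates. I would also remark that the dependence of $\kappa, \omega, \sigma, \rho, c_0$ on $\theta, T_0, R$ is exactly as permitted by $(A_2)$, since all the quantities above were built from the $(A_5)$-data (which depends on $\theta, T_0, R$) and from $\mathcal{B}_{F_1}$ (likewise).
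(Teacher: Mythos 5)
Your proposal is correct and follows essentially the same route as the paper: uniform $F_1$-boundedness of $\Phi(\bar z)(t)$ from $(A_5)$, relative compactness in $F_0$ via the compact embedding, continuity of $A_0$ to get a relatively compact subset of $\mathcal{H}(E_1,E_0)$, and extraction of uniform $\kappa,\omega$ (the paper simply cites \cite[I.Cor.1.3.2]{LQPP} for the step you sketch by a covering argument), after which \eqref{t5}, \eqref{t6} and causality follow by chaining the estimates. Only cosmetic remarks: since $A_0\big([0,T_0]\times M\big)\subset\mathcal{H}(E_1,E_0;\kappa,\omega)$ one may simply take $\sigma=0$ rather than $\sigma=\omega$, and the uniform bound on $\|\Phi(\bar z)(t)\|_{F_1}$ comes from the Lipschitz estimate in $(A_5)$ against the fixed reference $\Phi(0)$ rather than from H\"older continuity at $t=0$.
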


\begin{proof}
Given $ T_0, R>0$ and $\theta\in (0,1)$ it follows from assumption $(A_5)$ that there exists a bounded set $M\subset F_1$ such that
$\Phi(\bar{z})(t)\in M$ for all $0\le t\le T\le T_0$ and $\bar{z}\in C^\theta(I_T,E_\alpha)$ with $\|\bar{z}\|_{C^\theta(I_T,E_\alpha)}\le R$. Due to the compactness of the embedding $F_1\hookrightarrow F_0$ we deduce that $M$ is relatively compact in $F_0$ and so is $A_0\big([0,T_0]\times M\big)$ in $\mathcal{H}(E_1,E_0)$ by continuity. Hence, \cite[I.Cor.1.3.2]{LQPP} ensures the existence of numbers $\kappa\ge1$ and $\omega>0$ such that $A_0\big([0,T_0]\times M\big)\subset \mathcal{H}(E_1,E_0;\kappa,\omega)$. But then $(A_5)$, \eqref{local}, \eqref{A0}, and the continuous embedding $F_1\hookrightarrow F_0$ readily imply $(A_2)$.
\end{proof}

It is worthwhile to point out that assumption $(A_5)$ is trivially satisfied if $\Phi$ is the identity. Therefore, assumption $(A_2)$ holds for operators $A$ depending locally with respect to time on $\bar{z}$:

\begin{cor}\label{C22}
Suppose that the embedding $E_1\hookrightarrow E_0$ is compact and let the operator $A$ be of the form
$
A[\bar{z}](t)=A_0\big(t,\bar{z}(t)\big)$,
where $A_0\in C_b^{\varrho,1-}\big(\R^+\times E_\sigma, \mathcal{H}(E_1,E_0)\big)$ for some $\varrho\in (0,1)$ and $\sigma\in [0,1)$. Then $A$ satisfies assumption $(A_2)$ for any $\alpha\in (\sigma,1]$. 
\end{cor}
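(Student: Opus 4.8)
The plan is to deduce Corollary~\ref{C22} from Proposition~\ref{ex1} by choosing the auxiliary spaces $F_0$ and $F_1$ so that the reduction is immediate. First I would set $F_0:=E_\sigma$ and $F_1:=E_\alpha$ for a fixed $\alpha\in(\sigma,1]$. Since the embedding $E_1\hookrightarrow E_0$ is compact and $\sigma<\alpha$, standard interpolation facts (e.g.\ \cite[I.Thm.2.11.1]{LQPP}) give that $E_\alpha\hookrightarrow E_\sigma$ is compact, so the compactness hypothesis of Proposition~\ref{ex1} is met. With these identifications the operator $A[\bar z](t)=A_0(t,\bar z(t))$ has exactly the form \eqref{local} with $\Phi=\mathrm{id}$, and the hypothesis $A_0\in C_b^{\varrho,1-}(\R^+\times E_\sigma,\mathcal{H}(E_1,E_0))$ is precisely \eqref{A0}.

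Next I would verify that $\Phi=\mathrm{id}$ satisfies assumption $(A_5)$ with the chosen spaces. Because $E_\alpha\hookrightarrow E_\sigma$ continuously, any $\bar z\in C^\theta(I_T,E_\alpha)$ with $\|\bar z\|_{C^\theta(I_T,E_\alpha)}\le R$ is automatically $\rho$-H\"older into $F_1=E_\alpha$ (take $\rho=\theta$), which gives the first inequality in $(A_5)$ with $c_4=R$. The second inequality is the trivial identity $\|\bar z(t)-\bar z_*(t)\|_{E_\alpha}\le\|\bar z-\bar z_*\|_{C(I_T,E_\alpha)}$, so $c_4=1$ works there. Finally, the causality-type condition ($\bar z|_{I_T}=\bar z_*|_{I_T}$ implies $\Phi(\bar z)|_{I_T}=\Phi(\bar z_*)|_{I_T}$) is obvious for the identity. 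Hence $(A_5)$ holds.

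Having checked all hypotheses of Proposition~\ref{ex1}, I would simply invoke that proposition to conclude that $A$ satisfies $(A_2)$. Since $\alpha\in(\sigma,1]$ was arbitrary, the corollary follows in the stated generality.

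The only point requiring any care — and the place I expect to be the main (if minor) obstacle — is making sure that the index conventions line up: Proposition~\ref{ex1} requires the composition $A_0(t,\Phi(\bar z)(t))$ to be well-defined with $\Phi$ mapping into $F_0$, and here we need $E_\alpha\hookrightarrow E_\sigma=F_0$, which forces $\alpha\ge\sigma$; the strict inequality $\alpha>\sigma$ is what upgrades this embedding to a \emph{compact} one via interpolation, and compactness is essential for the argument in Proposition~\ref{ex1} (it is what allows \cite[I.Cor.1.3.2]{LQPP} to produce uniform constants $\kappa,\omega$). Everything else is a direct substitution.
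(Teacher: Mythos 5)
Your proposal is correct and follows essentially the same route as the paper: the paper likewise applies Proposition \ref{ex1} with $F_0:=E_\sigma$, $F_1:=E_\alpha$, $\Phi=\mathrm{id}$ (for which $(A_5)$ is trivially satisfied), and obtains compactness of $E_\alpha\hookrightarrow E_\sigma$ from the compact embedding $E_1\hookrightarrow E_0$ via \cite[I.Thm.2.11.1]{LQPP} and the admissibility of the interpolation functors. Your additional detail in verifying $(A_5)$ for the identity is fine and matches the paper's remark preceding the corollary.
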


\begin{proof}
It just remains to observe that the embedding $F_1:=E_\alpha\hookrightarrow E_\sigma=:F_0$ is compact according to \cite[I.Thm.2.11.1]{LQPP} since $\sigma<\alpha$ and due to the choice of admissible interpolation functors $(\cdot,\cdot)_\theta$.
\end{proof}

If $A$ is of the form \eqref{local}, then also the conditions \eqref{i}, \eqref{ii} for global existence are simpler to verify. Thus we consider again the unique maximal generalized solution $u=u(\cdot;u^0)\in C(J,\E_\beta)$  to \eqref{1}-\eqref{4} on $J=J(u^0)$ corresponding to $u^0$ as provided by Theorem \ref{T}. 

\begin{cor}\label{C222}
Suppose that the embedding $F_1\hookrightarrow F_0$ is compact and let the operator $A$ be of the form \eqref{local} with $A_0$ satisfying \eqref{A0}. Let $\Phi$ satisfy assumption $(A_5)$ and suppose
that for each $T>0$ there exist numbers $\rho\in (0,1)$ and $c_5(T)>0$ such that the solution $u$ to \eqref{1}-\eqref{4} satisfies
\bqn\label{u}
\left\| \Phi(\bar{u})(t)-\Phi(\bar{u})(t_*)\right\|_{F_1}\,\le c_5(T)\, \vert t-t_*\vert^\rho\ ,\quad t,t_*\in J_T:= J\cap [0,T]\ .
\eqn
Then \eqref{i} and \eqref{ii} hold.
\end{cor}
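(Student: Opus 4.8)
The plan is to verify conditions \eqref{i} and \eqref{ii} directly, using the structural form \eqref{local} together with the regularity of $A_0$ in \eqref{A0} and the compactness argument already carried out in the proof of Proposition \ref{ex1}. First I would fix $T>0$ and set $J_T:=J\cap[0,T]$. The generalized solution $u$ from Theorem \ref{T} belongs to $C(J,\E_\beta)$, hence $\bar u\in C(J_T,E_\beta)\hookrightarrow C(J_T,E_\alpha)$ is bounded; moreover, from the conclusion of Theorem \ref{T} we know that $\bar u$ is H\"older continuous on $J_T$, say $\bar u\in C^\theta(J_T,E_\alpha)$ for a suitable $\theta\in(0,1)$, with some bound $R$ on $\|\bar u\|_{C^\theta(J_T,E_\alpha)}$. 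Then assumption $(A_5)$ applies (on each compact subinterval of $J_T$, or directly if $T\in\dot J$) and yields that $\Phi(\bar u)$ maps into $F_1$ with $\Phi(\bar u)(t)\in M$ for all $t\in J_T$, where $M\subset F_1$ is a bounded set depending only on $T_0$, $R$, $\theta$.

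Next I would invoke the compactness of the embedding $F_1\hookrightarrow F_0$: the bounded set $M$ is relatively compact in $F_0$, so its closure $\overline M^{\,F_0}$ is compact in $F_0$, and by continuity of $A_0$ the image $A_0\big([0,T]\times \overline M^{\,F_0}\big)$ is compact in $\mathcal{H}(E_1,E_0)$. By \cite[I.Cor.1.3.2]{LQPP} there exist $\kappa\ge1$ and $\omega>0$ (depending on $T$) with $A_0\big([0,T]\times \overline M^{\,F_0}\big)\subset\mathcal{H}(E_1,E_0;\kappa,\omega)$. Since $A[\bar u](t)=A_0\big(t,\Phi(\bar u)(t)\big)$ and $t\mapsto\Phi(\bar u)(t)$ is continuous into $F_1\hookrightarrow F_0$, the map $t\mapsto A[\bar u](t)$ is continuous into $\mathcal{H}(E_1,E_0)$ and takes values in $\mathcal{H}(E_1,E_0;\kappa,\omega)$; this is exactly \eqref{i} with $\sigma$ chosen (if needed) so that $\sigma+A[\bar u]$ falls in the right class, using the standard shift as in $(A_2)$.

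For \eqref{ii} I would estimate, for $t,t_*\in J_T$,
\bqnn
\big\|A[\bar u](t)-A[\bar u](t_*)\big\|_{\mathcal{L}(E_1,E_0)}
\le \big\|A_0\big(t,\Phi(\bar u)(t)\big)-A_0\big(t_*,\Phi(\bar u)(t_*)\big)\big\|_{\mathcal{H}(E_1,E_0)}\ ,
\eqnn
and then use the $C_b^{\varrho,1-}$-regularity \eqref{A0} of $A_0$ on the bounded set $[0,T]\times M$ to bound this by $c(T)\big(|t-t_*|^\varrho+\|\Phi(\bar u)(t)-\Phi(\bar u)(t_*)\|_{F_0}\big)$. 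Finally, the extra hypothesis \eqref{u} controls the $F_1$-modulus of continuity of $\Phi(\bar u)$, and since $F_1\hookrightarrow F_0$ continuously this gives $\|\Phi(\bar u)(t)-\Phi(\bar u)(t_*)\|_{F_0}\le c\,\|\Phi(\bar u)(t)-\Phi(\bar u)(t_*)\|_{F_1}\le c\,c_5(T)\,|t-t_*|^\rho$. Combining, we obtain \eqref{ii} with exponent $\varrho\wedge\rho$ and a constant $c_1=c_1(T)$, which completes the proof.

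The only genuinely delicate point is the interplay between the \emph{open} interval $J$ and the uniformity of constants: $(A_5)$ and the Hölder bound on $\bar u$ are a priori available on each compact subinterval $I_S$ with $S\in\dot J$, and one must check that the resulting $\kappa$, $\omega$, and the constant in \eqref{ii} can be taken independent of $S$ up to $T$ — this follows because the bound $R$ on $\|\bar u\|_{C^\theta(I_S,E_\alpha)}$ and hence the set $M$ depend only on $T_0$ (equivalently $T$), not on $S$, so the compact set $A_0\big([0,T]\times\overline M^{\,F_0}\big)$ and the associated $\kappa,\omega$ are fixed once $T$ is fixed. Everything else is a routine assembly of \eqref{local}, \eqref{A0}, $(A_5)$, the compact embedding, and \eqref{u}.
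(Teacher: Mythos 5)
Your overall skeleton (compact embedding $F_1\hookrightarrow F_0$ plus \cite[I.Cor.1.3.2]{LQPP} for \eqref{i}; the $C_b^{\varrho,1-}$-estimate \eqref{A0} combined with \eqref{u} and $F_1\hookrightarrow F_0$ for \eqref{ii}) is the paper's argument, but the step you flag as ``delicate'' in your last paragraph contains a genuine gap: you obtain the bounded set $M\subset F_1$ with $\Phi(\bar u)(J_T)\subset M$ from assumption $(A_5)$, which requires a bound $R$ on $\|\bar u\|_{C^\theta(I_S,E_\alpha)}$ that is uniform in $S$ up to the endpoint of $J_T$, and you assert that such an $R$ ``depends only on $T$, not on $S$.'' That is unjustified. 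Theorem \ref{T} gives H\"older continuity of $\bar u$ only on compact subintervals of $J$, with constants produced by the local fixed-point/extension procedure (the radius $R$ there is chosen in terms of $\|u\|_{C(I_S,\E_\beta)}$, cf.\ \eqref{55}), so the H\"older seminorm of $\bar u$ may blow up as one approaches $\sup J$. This is precisely the scenario in which Corollary \ref{C222} is meant to be used (feeding \eqref{i}, \eqref{ii} into Proposition \ref{P3} to rule out finite-time blow-up), so one cannot assume a uniform-in-$S$ bound there; if one could, global existence would essentially be free.

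The repair is exactly what the paper does, and it makes your detour through $(A_5)$ for the bound unnecessary: hypothesis \eqref{u} itself yields that $\Phi(\bar u)(J_T)$ is bounded in $F_1$, since $J_T\subset[0,T]$ is a bounded interval and $\|\Phi(\bar u)(t)\|_{F_1}\le\|\Phi(\bar u)(0)\|_{F_1}+c_5(T)\,T^\rho$ for $t\in J_T$ (membership $\Phi(\bar u)(t)\in F_1$ is available from $(A_5)$ on compact subintervals, but the \emph{uniform} bound comes from \eqref{u}). With this bounded set in hand, your remaining steps go through verbatim: compactness of $F_1\hookrightarrow F_0$ and continuity of $A_0$ make the relevant image relatively compact in $\mathcal{H}(E_1,E_0)$, \cite[I.Cor.1.3.2]{LQPP} supplies $\kappa,\omega$ for \eqref{i}, and \eqref{A0} together with \eqref{u} and $F_1\hookrightarrow F_0$ gives \eqref{ii} with exponent $\varrho\wedge\rho$.
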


\begin{proof}
Since \eqref{u} in particular means that $\Phi(\bar{u})(J_T)$ is bounded in $F_1$, \eqref{ii} is immediate from \eqref{A0}. Analogously as in the proof of Lemma \ref{ex1}, condition \eqref{i} is a consequence of \cite[I.Cor.1.3.2]{LQPP} and the compact embedding $F_1\hookrightarrow F_0$.
\end{proof}

\subsection{Applications}

Since the following exemplary problems were studied elsewhere (except for the first one), we do not go too much into the details. Clearly, the results of Section \ref{main result} do not restrict to the examples presented herein.\\

For the remainder we fix a bounded subset $\Om$ of $\R^n$, $n\le 3$, with smooth boundary $\partial\Om$. Even though we may incorporate general time-dependent second order elliptic operators on $\Om$ subject to suitable boundary conditions, we restrict ourselves for the sake of simplicity to time-independent operators in divergence form, that is, operators of the form
\bqn\label{a3}
A_0(z)w\,:=\, -\nabla_x\cdot\big(D(z)\nabla_xw\big)
\eqn
subject to, e.g., Neumann conditions on $\partial\Om$. Here, the function $D$ is supposed to satisfy
\bqn\label{a4}
D\in C^{2-}(\R)\ ,\qquad D(z)\ge d_0>0\ ,\quad z\in \R\ ,
\eqn
for some number $d_0$. Introducing for $p\in (1,\infty)$ and $\theta\ge 0$ the Sobolev spaces (including Neumann boundary conditions)
\begin{equation*}
        \Wqb^{2\theta}:=\left\{ \begin{array}{ll} \big\{ u\in
            W_{p}^{2\theta}(\Om)\, ; \, \partial_\nu u=0\big\}\ , & 2\theta>1+1/p\ ,\\
            W_{p}^{2\theta}(\Om)\ ,& 0\leq 2\theta \le 1+1/p\ , \end{array} \right.
     \end{equation*}
we obtain that 
\bqn\label{a5}
\text{the embedding}\quad \Wqb^2\hookrightarrow L_p\quad \text{is compact}
\eqn
and
\bqn\label{a6}
E_{1/2}:=[L_p,\Wqb^2]_{1/2}=\Wqb^{1}\ ,\qquad E_\theta:= (L_p,\Wqb^2)_{\theta,p}=\Wqb^{2\theta}\ ,\quad 2\theta\in (0,2)\setminus\{1,1+1/p\}\ ,
\eqn
where the equality is up to equivalent norms and where $[\cdot,\cdot]_{1/2}$ and $(\cdot,\cdot)_{\theta,p}$ are the complex and real interpolation functors, respectively, all of which are admissible. Moreover, 
\bqn\label{a7}
A_0\in C_b^{1-}\big(C^1(\bar{\Om}),\mathcal{H}(\Wqb^2,L_p)\big)
\eqn
due to \eqref{a4}, and
\bqn\label{a77}
A_0(z)\quad \text{is resolvent positive for}\ z\in C^1(\bar{\Om})\ .
\eqn
We  assume that a non-negative function $b \in C^{2-}(\R^+\times\R^+\times\R)$ is given that satisfies, for any $T,R>0$,
\begin{align}
\vert D_3^k b(t,a,z)-D_3^k b(t,a,z_*)\vert\,&\le\, c(T,R)\, \g(a)\,\vert z-z_*\vert\ ,\label{a8a}
\end{align}
for $t\in [0,T]$, $a\ge 0$, $\vert z\vert, \vert z_*\vert\le R$, and $k=0,1$, where $\g\in L_{\infty,loc}(\R^+)$ satisfies \eqref{a1111}. For simplicity we also assume that $b$ is bounded, that is,
\bqn
b(t,a,z)\,\le c(T)\,\g(a)\ ,\label{a8b}
\eqn
for $t\in [0,T]$, $a\ge 0$, and all $z\in \R$ (this is merely needed to guarantee that solutions exists globally in the subsequent examples).
Thus, it follows from (the proof of) \cite[Lem.2.7]{WalkerEJAM} that
\begin{align}
\|  b(t,a,\bar{z})- b(t,a,\bar{z}_*)\|_{W_p^{2\bar{\alpha}}}\,&\le\, c(T,R)\, \g(a)\,\| \bar{z}-\bar{z}_*\|_{W_p^{2\alpha}}\ ,\label{a9a}\\
\| b(t,a,\bar{z})\|_{W_p^{2\bar{\alpha}}}\,&\le c(T,R)\,\g(a)\ ,\label{a9b}
\end{align}
for $t\in [0,T]$, $\bar{z},\bar{z}_* \in W_p^{2\alpha}$ with $\| \bar{z}\|_{W_p^{2\alpha}},\| \bar{z}_*\|_{W_p^{2\alpha}}\le R$, provided that $n/p<2\bar{\alpha}<2\alpha$. Also note that there is $2\mu\in (n/p,2\bar{\alpha})$ such that (see \cite{AmannMultiplication})
\bqn\label{a99}
\text{ pointwise multiplication}\quad 
W_p^{2\bar{\alpha}}\times W_p^{2\alpha}\rightarrow W_p^{2\mu}\quad\text{is continuous }\ .
\eqn
We put
$$
\Wqbb^{2\theta}:=L_1\big(\R^+,\Wqb^{2\theta},\g(a)\rd a\big)\ .
$$
Then $z\in\Wqbb^{2\theta}$ is non-negative if $z\in \Wqbb^{2\theta}\cap L_1\big(\R^+,L_p^+,\g(a)\rd a\big)$ with $L_p^+$ denoting the positive cone of $L_p=L_p(\Om)$.
Let 
\bqn\label{a10}
m\in C(\R^+)\quad \text{with}\quad m\ge 0\ .
\eqn

\subsubsection{Birth boundary conditions with delay}

We consider a model with history-dependent birth rate as investigated in \cite{DiBlasio} for the spatially homogeneous case:
\begin{align}
\partial_t u+\partial_a u&\,=\, \divv_x
\big(D(\bar{u}(t,x))\,\nabla_x u\big)\, -\, m(a)\, u\ , &
(t,a,x)\in
\R^+\times\R^+\times\Om\ ,\label{a11}\\
u(t,0,x)&\,=\, \int_0^\infty b\left(t,a,\int_{-\tau}^0\bar{u}(t+\sigma)\rd \sigma\right)\, u(t,a)\ \rd a \ ,
&
(t,x)\in\R^+\times\Om\ ,\label{a12}\\
u(s,a,x)&\,=\,F(s,a,x)\ ,&  (s,a,x)\in
[-\tau,0]\times \R^+\times\Om \ ,\label{a13}\\
\partial_\nu u(t,a,x)&\,=\,0\ ,& (t,a,x)\in
   \R^+\times \R^+\times \partial\Om\ ,\label{a14}\\
   \bar{u}(t,x)&\,=\, \int_0^\infty u(t,a,x)\ \rd a\ ,& (t,x)\in [-\tau,\infty)\times \Om\ ,\label{a15}
\end{align}
where $\tau>0$ is the maximal delay.

\begin{prop}\label{E1}
Let $\g \equiv 1$ and suppose \eqref{a3}, \eqref{a4}, \eqref{a8a}, \eqref{a8b}, \eqref{a10}. Let $F\in C([-\tau,0],\Wqbb^{2\beta})$ be non-negative, where $1+n/p<2\beta\le 2$. Then \eqref{a11}-\eqref{a15} admit a unique non-negative generalized solution $u\in C(\R^+,\Wqbb^{2\beta})$ with $\bar{u}\in C^1((0,\infty),L_p)\cap C((0,\infty),\Wqb^2)$.
\end{prop}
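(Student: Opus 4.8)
The plan is to verify that the abstract framework of Theorem~\ref{T} (together with Proposition~\ref{ex1}, Proposition~\ref{P2}, and Proposition~\ref{P3}) applies to problem \eqref{a11}-\eqref{a15}. First I would set up the functional-analytic scene: take $E_0:=L_p(\Om)$, $E_1:=\Wqb^2$, and $\alpha$ a fixed number with $n/(2p)<\alpha<\beta$ so that, by \eqref{a6}, $E_\alpha=\Wqb^{2\alpha}$ embeds into $C^1(\bar\Om)$ (this needs $2\alpha>1+n/p$, which we can arrange since $2\beta>1+n/p$; if there is no room, shrink $\beta$ slightly or split the argument, but with $2\beta\le2$ and $2\beta>1+n/p$ there is always a valid $\alpha$). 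The weight is $\g\equiv1$, which trivially satisfies \eqref{a1111}, and $\h\equiv1$, which satisfies $(A_1)$ with $\zeta$ arbitrary since $\h$ is constant. The delay enters only through the phase space: the natural move is to reformulate \eqref{a11}-\eqref{a15} on the delay-augmented interval, i.e.\ treat the prescribed history $F$ on $[-\tau,0]$ as data and regard $t\mapsto\bar u(t)$ as known on $[-\tau,0]$ via \eqref{a13}, \eqref{a15}; then for $t\ge0$ the operator $B$ genuinely depends on $\int_{-\tau}^0\bar u(t+\sigma)\,\rd\sigma$, which for $t$ in a compact interval is a continuous (indeed locally Lipschitz) function of $\bar u$ restricted to $[-\tau,t]$.

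Next I would check assumptions $(A_1)$-$(A_4)$. For $(A_2)$: the diffusion operator is $A[\bar u](t)=A_0(\bar u(t))$ with $A_0$ given by \eqref{a3}; by \eqref{a7} we have $A_0\in C_b^{1-}(C^1(\bar\Om),\mathcal H(\Wqb^2,L_p))$, and since $E_\alpha\hookrightarrow C^1(\bar\Om)$ with $E_1=\Wqb^2$ compactly embedded in $L_p$ by \eqref{a5}, Corollary~\ref{C22} (with $\Phi$ the identity, $F_1=E_\alpha$, $F_0=C^1(\bar\Om)$, $\varrho$ arbitrary in $(0,1)$ since $A_0$ is time-independent) delivers $(A_2)$. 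For $(A_4)$: $m=m(a)$ is continuous and non-negative, so the Lipschitz bound in $\bar u$ is vacuous and \eqref{mm} holds with $c_0=1$. The crux is $(A_3)$. Here $B[u](t)=\int_0^\infty b\big(t,a,\int_{-\tau}^0\bar u(t+\sigma)\,\rd\sigma\big)\,u(t,a)\,\rd a$. Using \eqref{a9a}, \eqref{a9b} one controls $b(t,a,\cdot)$ in $W_p^{2\bar\alpha}$ with $n/p<2\bar\alpha<2\alpha$, and \eqref{a99} gives a continuous pointwise multiplication $W_p^{2\bar\alpha}\times W_p^{2\alpha}\to W_p^{2\mu}$; integrating in $a$ against the (here trivial) weight and using that $\bar u\in\E_\alpha$ implies $\|\bar u\|_{E_\alpha}\le c\|u\|_{\E_\alpha}$ by \eqref{66}, one obtains $B:C(I_T,\E_\alpha)\to C(I_T,E_\mu)$ with the Lipschitz estimate \eqref{B}. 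The map $\bar u\mapsto\int_{-\tau}^0\bar u(t+\sigma)\,\rd\sigma$ is linear and bounded from $C(I_T,E_\alpha)$ (using the extension by $F$ on $[-\tau,0]$, which is fixed) into $E_\alpha$, so it composes harmlessly with the Lipschitz dependence of $b$ in its third slot; the causality clause ($B[u]|_{I_T}$ depends only on $u|_{I_T}$) is clear since $\int_{-\tau}^0\bar u(t+\sigma)\,\rd\sigma$ for $t\le T$ only sees $\bar u$ on $[-\tau,T]$. This yields existence and uniqueness of a maximal generalized solution $u\in C(J,\Wqbb^{2\beta})$ by Theorem~\ref{T}.

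Positivity then follows from Proposition~\ref{P2}: $A_0(z)$ is resolvent positive for $z\in C^1(\bar\Om)$ by \eqref{a77}, $F\ge0$ gives $u^0\in\E_\beta^+$, and since $b\ge0$ and $m\ge0$ the operator $B$ maps non-negative densities (with non-negative history, hence non-negative argument $\int_{-\tau}^0\bar u\,\rd\sigma$) to non-negative functions, so $u(t)\in\Wqbb^{2\beta}$ stays non-negative. Global existence comes from Proposition~\ref{P3} with $\vartheta=2\mu$ (note $(\vartheta,\beta)\ne(0,1)$): conditions \eqref{i}, \eqref{ii} hold because $A[\bar u]$ is time-independent up to the continuous substitution $\bar u(t)$, which along the solution lies in a fixed compact subset of $C^1(\bar\Om)$ (invoke Corollary~\ref{C222} or argue directly with \cite[I.Cor.1.3.2]{LQPP}); and \eqref{iii} holds because the boundedness bound \eqref{a8b} together with \eqref{a9b} gives $\|B[u](t)\|_{E_\mu}\le c(T)\,\|u(t)\|_{\E_0}\le c(T)(1+\|u(t)\|_{\E_\beta})$, with $m\ge0$. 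Gronwall then forces $J=\R^+$. Finally, the regularity $\bar u\in C^1((0,\infty),L_p)\cap C((0,\infty),\Wqb^2)$ is exactly \eqref{dd} of Theorem~\ref{T} with $\tilde\h\equiv1$.

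The main obstacle I anticipate is twofold: first, bookkeeping the delay cleanly — one must be careful that the reformulation on $[-\tau,\infty)$ genuinely fits the autonomous-in-time-shift structure demanded by $(A_3)$ and that the constants in \eqref{B} are uniform on bounded sets including the fixed history $F$; second, pinning down admissible exponents, i.e.\ producing a coherent chain $n/p<2\mu<2\bar\alpha<2\alpha<2\beta\le2$ with simultaneously $2\alpha>1+n/p$ (so $E_\alpha\hookrightarrow C^1(\bar\Om)$), $2\bar\alpha>n/p$ (so \eqref{a9a}, \eqref{a9b} apply), and the multiplication \eqref{a99} valid. For $n\le3$ and $1+n/p<2\beta\le2$ such exponents exist, but the verification that the hypothesis $1+n/p<2\beta$ leaves enough room — in particular that one can take $2\alpha$ strictly between $1+n/p$ and $2\beta$ — is the delicate point that must be checked explicitly.
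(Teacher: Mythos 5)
Your local-existence and positivity part follows the paper's route exactly: extend by the history $F$, write \eqref{a11}-\eqref{a15} in the abstract form \eqref{1}-\eqref{4}, get $(A_2)$ from \eqref{a5}-\eqref{a7} and Corollary~\ref{C22} via $\Wqb^{2\beta}\hookrightarrow\Wqb^{2\alpha}\hookrightarrow C^1(\bar{\Om})$ for $1+n/p<2\alpha<2\beta$, get $(A_3)$ from \eqref{a9a}, \eqref{a99} with the delay handled as a fixed, causal linear operation on the extended $\bar u$, and invoke Theorem~\ref{T}, Proposition~\ref{P2}, \eqref{a77}, \eqref{a10}. That much is fine.

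The genuine gap is in the global-existence step. To apply Proposition~\ref{P3} you must verify \eqref{i} and \eqref{ii} \emph{on the maximal interval} $J_T=J\cap[0,T]$, i.e.\ a uniform $\mathcal{H}(E_1,E_0;\kappa,\omega)$ bound and a uniform H\"older-in-$t$ estimate for $A[\bar u](t)=A_0(\bar u(t))$. You assert that along the solution $\bar u(t)$ ``lies in a fixed compact subset of $C^1(\bar\Om)$'' and that one may ``invoke Corollary~\ref{C222} or argue directly'', but this is precisely the a priori estimate that has to be proved and it does not follow from anything earlier: $J_T$ need not be closed, so continuity of $\bar u$ on $J_T$ gives neither boundedness in $C^1(\bar\Om)$ nor the H\"older estimate \eqref{u} required by Corollary~\ref{C222}; a priori $\bar u$ could blow up in $W_p^{2\alpha}$ (hence in $C^1$) as $t$ approaches $\sup J$. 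The paper closes this gap by a bootstrapping argument you omit: $\bar u$ solves the scalar quasilinear parabolic problem $\partial_t\bar u-\nabla_x\cdot(D(\bar u)\nabla_x\bar u)=f$ with $f=-\int_0^\infty m(a)u(t,a)\,\rd a+B[u](t)$; the boundedness \eqref{a8b} of $b$ and the maximum principle give $\bar u\in L_\infty(J_T,L_\infty(\Om))$, hence $f\in BC(J_T\times\bar\Om)$, and then Amann's parabolic theory \cite[Thm.4.2, Rem.4.3]{Amann93} yields that $\bar u:J_T\to C^{1+\epsilon}(\bar\Om)$ is bounded and uniformly H\"older continuous; only then does the compact embedding $C^{1+\epsilon}(\bar\Om)\hookrightarrow C^1(\bar\Om)$ together with Corollary~\ref{C222} deliver \eqref{i} and \eqref{ii}, and Proposition~\ref{P3} gives $J=\R^+$. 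Without this (or an equivalent) a priori estimate your argument does not yield global existence. A minor additional slip: in checking \eqref{iii} the multiplication \eqref{a99} requires the $W_p^{2\alpha}$-norm of $u(t,a)$, so the natural bound is $\|B[u](t)\|_{E_\mu}\le c\,\|u(t)\|_{\E_\alpha}\le c\,(1+\|u(t)\|_{\E_\beta})$, not an estimate through $\|u(t)\|_{\E_0}$; the conclusion \eqref{iii} is still fine.
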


\begin{proof}
We merely sketch the proof. Extending a given function $u\in C(\R^+,\Wqbb^{2\beta})$ by
$$
u(t,\cdot,\cdot):=\left\{\begin{array}{ll}  u(t,\cdot,\cdot)\ , &t\ge 0\ ,\\
F(t,\cdot,\cdot)\  , &t\in [-\tau,0)\ ,\end{array}\right.
$$
and defining
$$
A[\bar{u}](t):=A_0(\bar{u}(t))\ ,\qquad B[u](t):= \int_0^\infty b\left(t,a,\int_{-\tau}^0\bar{u}(t+\sigma)\rd \sigma\right)\, u(t,a)\ \rd a\ ,
$$
equations \eqref{a11}-\eqref{a15} may be written in the form \eqref{1}-\eqref{4} with $u^0:=F(0,\cdot,\cdot)$. Then $(A_2)$ is a consequence of \eqref{a5}-\eqref{a7} and Corollary \ref{C22} by observing that 
\bqn\label{g}
\Wqb^{2\beta}\hookrightarrow \Wqb^{2\alpha}\hookrightarrow C^1(\bar{\Om})\ ,\quad 1+n/p<2\alpha<2\beta\ ,
\eqn
while $(A_3)$ follows from \eqref{a9a} and \eqref{a99}.
Therefore, local existence of a non-negative generalized maximal solution $u\in C(J,\Wqbb^{2\beta})$ on some maximal interval $J$ is immediate from Theorem \ref{T}, Proposition \ref{P2}, \eqref{a77}, and \eqref{a10}. Next note that $\bar{u}\in C^1(\dot{J},L_p)\cap C(\dot{J},\Wqb^2)$ solves
$$
\partial_t\bar{u}-\nabla_x\cdot\big(D(\bar{u})\nabla_x\bar{u}\big)=-\int_0^\infty m(a) u(t,a)\rd a+ B[u](t)=: f(t,x)
$$ 
in $\dot{J}_T\times\Om$, with $J_T:=J\cap [0,T]$ for $T>0$ fixed. Since $B[u]\in C(J_T,W_p^{2\mu})$ by \eqref{a99} and $W_p^{2\mu}\hookrightarrow C(\bar{\Om})$ we have $f\in C(J_T\times\bar{\Om})$. From \eqref{a8b} and the maximum principle we first obtain $\bar{u}\in L_\infty(J_T,L_\infty(\Om))$ and then $f\in BC(J_T\times\bar{\Om})$. Hence, \cite[Thm.4.2, Rem.4.3]{Amann93} entail that $\bar{u}:J_T\rightarrow C^{1+\epsilon}(\bar{\Om})$ is bounded and uniformly H\"older continuous with $\epsilon>0$. Since the embedding $F_1:=C^{1+\epsilon}(\bar{\Om})\hookrightarrow C^{1}(\bar{\Om})=:F_0$ is compact, we deduce \eqref{i} and \eqref{ii} from Corollary \ref{C222}, while \eqref{iii} is obvious. Proposition \ref{P3} then gives $J=\R^+$.
\end{proof}

\subsubsection{A tumor invasion model}

The following {\it haptotaxis} model describes the invasion of tumor cells (with density $u$) into the surrounding tissue along gradients of bound cell adhesion molecules (with density $f$) that are contained in the extracellular matrix. The cells produce a matrix degradative enzyme with density $v$. The model was studied in detail in \cite{WalkerDIE,WalkerEJAM}, and we just recall a very simple version:
\begin{align}
\partial_t u+\partial_a u&\,=\, \divv_x
\big(D(f)\,\nabla_x u-u\chi(f)\nabla_x f\big)\, -\, m(a)\, u\ , &
(t,a,x)\in
\R^+\times\R^+\times\Om\ ,\label{c1}\\
\partial_t f &\,=\, -vf\ ,&
(t,x)\in
\R^+\times\Om\ ,\label{c2}\\
\partial_tv&\,=\,\Delta_x v+\bar{u}-v\ ,&
(t,x)\in
\R^+\times\Om\ ,\label{c3}\\
u(t,0,x)&\,=\, \int_0^\infty b(t,a,\bar{u}(t))\, u(t,a)\ \rd a \ ,
&
(t,x)\in\R^+\times\Om\ ,\label{c4}\\
u(0,a,x)&\,=\, u^0(a,x)\ ,\ f(0,x)\,=\,f^0(x)\ ,\ v(0,x)\,=\,v^0(x)\ ,& (a,x)\in
 \R^+\times\Om \ ,\label{c5}\\
\partial_\nu v&\,=\, D(f)\partial_\nu u-u\chi(f)\partial_\nu f\, =\,0\ ,& (t,a,x)\in
   \R^+\times \R^+\times \partial\Om\ ,\label{c6}\\
   \bar{u}(t,x)&\,=\, \int_0^\infty u(t,a,x)\ \rd a\ ,& (t,x)\in
   \R^+\times \Om\ .\label{c7a}
\end{align}
If $\chi$ is smooth and $D$ satisfies \eqref{a4}, we obtain for
$$
A_1(f):=[w\mapsto w\chi(f)\nabla_x f]
$$
that
\bqn\label{c7b}
A_0 +A_1\in C_b^{1-}\big(\Wqb^2,\mathcal{H}(\Wqb^2,L_p)\big)\ ,\quad p>n\ .
\eqn
Given initial values $(f^0,v^0)\in \Wqb^{2+\tau}\times \Wqb^{2\tau}$ with $\tau>0$ and a suitable function $\bar{u}$, we first solve \eqref{c3} for $v$ and plug the result into equation \eqref{c2}. It follows from \cite[Lem.2.1]{WalkerSIAM} and \cite[Lem.2.6]{WalkerEJAM} that
\bqn\label{c8}
\Phi(\bar{u}):=f\quad \text{satisfies $(A_5)$ with $F_1:=\Wqb^{2+\epsilon}$, $\epsilon\in (0,\tau)$, and any $\alpha\in [0,1)$}\ .
\eqn
We then recall the result of \cite{WalkerEJAM}:

\begin{prop}\label{U1}
Let $\g\equiv 1$ and suppose \eqref{a4}, \eqref{a8a}, \eqref{a8b}, and \eqref{a10}. Let $\chi$ be a smooth function. Let $p>n$, $\tau>0$, and $2\beta\in (n/p,2)\setminus\{1+1/p\}$. Then, given non-negative initial values 
$$
(f^0,v^0,u^0)\in X:=\Wqb^{2+\tau}\times \Wqb^{2\tau}\times\Wqbb^{2\beta}
$$
there exists a unique non-negative solution $(f,v,u)\in C(\R^+,X)$ to \eqref{c1}-\eqref{c7a}, $f$ and $v$ being classical solutions to the corresponding equations. Moreover, $\bar{u}\in C^1(\R^+,L_p)\cap C(\R^+,\Wqb^2)$.
\end{prop}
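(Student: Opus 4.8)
The plan is to decouple the system, rewrite \eqref{c1}-\eqref{c7a} in the abstract form \eqref{1}-\eqref{4}, and then apply Theorem \ref{T}, Proposition \ref{P2}, and Proposition \ref{P3} along the lines of the proof of Proposition \ref{E1}. Fix $\alpha\in (n/(2p),\beta)$ (possible since $2\beta>n/p$, and then $\alpha<\beta<1$), chosen so that $2\alpha\notin\{1,1+1/p\}$. Given $\bar u\in C^\theta(I_T,\Wqb^{2\alpha})$ one first solves the linear parabolic equation \eqref{c3} with datum $v^0$ and source $\bar u$, and then the ordinary differential equation \eqref{c2} with datum $f^0$, whose solution is $f(t)=f^0\exp(-\int_0^t v(s)\,\rd s)$; set $\Phi(\bar u):=f$. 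By \eqref{c8}, i.e.\ by \cite[Lem.2.1]{WalkerSIAM} and \cite[Lem.2.6]{WalkerEJAM}, the map $\Phi$ satisfies assumption $(A_5)$ with $F_1:=\Wqb^{2+\epsilon}$ for some $\epsilon\in(0,\tau)$ and $F_0:=\Wqb^2$. One then puts
$$
A[\bar u](t):=A_0\big(\Phi(\bar u)(t)\big)+A_1\big(\Phi(\bar u)(t)\big),\qquad B[u](t):=\int_0^\infty b\big(t,a,\bar u(t)\big)\,u(t,a)\,\rd a,\qquad m=m(a),
$$
with $A_0,A_1$ as in \eqref{a3}, \eqref{c7b}, so that \eqref{c1}-\eqref{c7a} becomes \eqref{1}-\eqref{4} with $\h\equiv\g\equiv 1$ and the given $u^0$.

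Next I would verify the structural hypotheses. Since $\h\equiv\g\equiv 1$, assumption $(A_1)$ holds trivially (with any $\zeta\in(0,1)$). Assumption $(A_2)$ follows from Proposition \ref{ex1}: $A$ has the form \eqref{local}, \eqref{c7b} provides \eqref{A0} with $F_0=\Wqb^2$, $\Phi$ satisfies $(A_5)$, and the embedding $F_1=\Wqb^{2+\epsilon}\hookrightarrow\Wqb^2=F_0$ is compact. Assumption $(A_3)$ follows from the Nemytskii and pointwise-multiplication estimates \eqref{a9a}, \eqref{a9b}, \eqref{a99}, which give $B\colon C(I_T,\Wqbb^{2\alpha})\to C(I_T,\Wqb^{2\mu})$ uniformly Lipschitz on bounded sets for suitable $\mu$ with $n/p<2\mu<2\bar\alpha<2\alpha$; this is the same step as in Proposition \ref{E1}. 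Assumption $(A_4)$ holds because $m=m(a)$ is independent of $\bar u$ and non-negative by \eqref{a10}, so \eqref{mm} is satisfied. Theorem \ref{T} then yields a unique maximal generalized solution $u\in C(J,\Wqbb^{2\beta})$ on an open maximal interval $J$, and \eqref{dd} gives $\bar u\in C^1(\dot J,L_p)\cap C(\dot J,\Wqb^2)$; together with the construction of $\Phi$ this makes $f=\Phi(\bar u)$ and the associated $v$ classical solutions of \eqref{c2}, \eqref{c3}.

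For the positivity assertion I would invoke Proposition \ref{P2} on the cone $\mathcal V_T^+$. The operators $A_0(z)+A_1(z)$ are resolvent positive for $z\in C^1(\bar\Om)$ (the maximum principle for the haptotaxis operator, cf.\ \cite{WalkerEJAM}), one has $\Phi(\bar v)(t)=f(t)\in\Wqb^{2+\epsilon}\hookrightarrow C^1(\bar\Om)$ since $p>n$, and $B$ maps non-negative functions to non-negative functions because $b\ge 0$. Moreover $v\ge 0$, since $v$ solves a heat equation with non-negative source $\bar u\ge 0$ and non-negative datum $v^0$, and hence $f(t)=f^0\exp(-\int_0^t v(s)\,\rd s)\ge 0$. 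Thus $u^0\in(\Wqbb^{2\beta})^+$ forces $u(t)\in(\Wqbb^{2\beta})^+$, and correspondingly $f,v\ge 0$, for all $t\in J$.

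It remains to show $J=\R^+$, which is obtained from Proposition \ref{P3}; this is the step carrying the real weight. Fix $T>0$. Condition \eqref{iii} (for a suitable $\vartheta$) is immediate from the boundedness assumption \eqref{a8b}, which for $\g\equiv 1$ gives $\|B[u](t)\|_{E_\vartheta}\le c(T)\,\|u(t)\|_{\Wqbb^{2\beta}}$. For \eqref{i}, \eqref{ii} I would appeal to Corollary \ref{C222}, so it suffices to bound $\Phi(\bar u)(t)=f(t)$ in $\Wqb^{2+\epsilon}$, uniformly and H\"older-continuously, for $t\in J_T:=J\cap[0,T]$. Because $u\ge 0$, integrating \eqref{c1}-\eqref{c4} over $\R^+\times\Om$, using the Neumann condition \eqref{c6} and the bound on $b$ in \eqref{a8b}, yields $\frac{\rd}{\rd t}\|u(t)\|_{L_1(\R^+\times\Om)}\le c(T)\,\|u(t)\|_{L_1(\R^+\times\Om)}$, hence a Gronwall bound showing that $\bar u$ stays bounded in $L_1(\Om)$ on $J_T$; parabolic smoothing for \eqref{c3} then bounds $v$, and the explicit representation of $f$ bounds $f$ in $\Wqb^{2+\epsilon}$, whence \eqref{u}. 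Proposition \ref{P3} gives $J=\R^+$, and $(f,v,u)\in C(\R^+,X)$ together with $\bar u\in C^1(\R^+,L_p)\cap C(\R^+,\Wqb^2)$ follows from \eqref{dd}. The main obstacle is exactly this uniform-in-time a priori bound on $(f,v)$ required by Corollary \ref{C222}; it rests on the positivity of $u$ (hence of $\bar u$) and on \eqref{a8b}, and is carried out in detail in \cite{WalkerEJAM}, to which I would refer for the remaining estimates.
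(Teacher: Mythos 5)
Your overall strategy---decoupling \eqref{c2}, \eqref{c3} through the map $\Phi$, recasting \eqref{c1}-\eqref{c7a} in the abstract form \eqref{1}-\eqref{4}, and invoking Theorem \ref{T} together with Proposition \ref{ex1} and \eqref{c8} for local existence, Proposition \ref{P2} for positivity, and Proposition \ref{P3} for global existence---is exactly the paper's, and the local-existence and positivity parts of your argument are in order.

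The gap is in the global-existence step, which is where the real weight lies. First, the passage from the $L_1$-bound on $\bar u$ (obtained from positivity, \eqref{a8b}, and the Neumann condition) to a bound on $f=f^0\exp(-\int_0^t v\,\rd s)$ in $\Wqb^{2+\epsilon}$ via ``parabolic smoothing for \eqref{c3}'' does not work as a one-step argument: for $n=3$ and $p>n$ the heat semigroup maps $L_1(\Om)$ into $L_p(\Om)$ only at the cost of a singularity $t^{-(n/2)(1-1/p)}$ whose exponent is $\ge 1$, so from $\bar u\in L_\infty(J_T,L_1(\Om))$ alone one cannot even bound $v(t)$ in $L_p$, let alone $\int_0^t v\,\rd s$ in $\Wqb^{2+\epsilon}$. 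What is needed is a genuine bootstrap (a weak bound on $f$ yields bounds on $u$ in stronger $\E_\theta$-type norms through the evolution-system estimates and a singular Gronwall argument, which improve the bounds on $v$ and $f$, and so on); this is precisely the content of \cite[Eq.\,(3.26)]{WalkerEJAM} that the paper quotes, so presenting it as an immediate consequence of smoothing understates the step. Second, your route through Corollary \ref{C222} requires the H\"older estimate \eqref{u} for $\Phi(\bar u)$ in $F_1=\Wqb^{2+\epsilon}$, which is strictly more than what that bootstrap is quoted to provide, namely boundedness and uniform H\"older continuity of $f$ in $\Wqb^{2}$ only. The paper instead verifies \eqref{i}, \eqref{ii} directly from the $\Wqb^2$-bound: by compactness of $\Wqb^2\hookrightarrow C^1(\bar\Om)$ and \cite[I.Cor.1.3.2]{LQPP} the set $A_0(f(J_T))$ lies in some $\mathcal{H}(\Wqb^2,L_p;\kappa,\omega)$, and the drift $A_1(f)$ is absorbed as a perturbation via \cite[I.Thm.1.3.1(b)]{LQPP}. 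So either sharpen the a priori estimate to $\Wqb^{2+\epsilon}$ (which requires an additional bootstrap step and $\epsilon$ small relative to $\tau$), or follow the compactness-plus-perturbation argument, which needs only the $\Wqb^2$-bound. Deferring the bootstrap itself to \cite{WalkerEJAM} is legitimate---the paper does the same---but as written your sketch both oversimplifies it and relies on a stronger estimate than the one you cite.
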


\begin{proof}
We simply outline the main ideas of the proof of Proposition \ref{U1} and refer to \cite{WalkerEJAM} for details. First, local existence is immediate from Theorem \ref{T}, Corollary \ref{ex1}, \eqref{a9a}, \eqref{a10}, \eqref{a9b}, \eqref{c7b}, and \eqref{c8}. Given $T>0$ one can prove by a bootstrapping argument that $f=\Phi(\bar{u}):J_T\rightarrow \Wqb^2$ is uniformly H\"older continuous and bounded (see \cite[Eq.(3.26)]{WalkerEJAM}), whence \eqref{ii} follows from \eqref{c7b}. In particular, since $f(J_T)$ is bounded in $\Wqb^2$ and the embedding $\Wqb^2\hookrightarrow C^1(\bar{\Om})$ is compact, we derive from \cite[I.Cor.1.3.2]{LQPP} that $A_0(f(J_T))$ is a subset of $\mathcal{H}(\Wqb^2,L_p;\kappa,\omega)$ for some $\kappa\ge 1$, $\omega>0$. Considering $A_1(f)$ as a perturbation of $A_0(f)$, we deduce \eqref{i} using \cite[I.Thm.1.3.1(b)]{LQPP}. Thus $J=\R^+$ by Proposition \ref{P3} since \eqref{iii} is obvious.\\
\end{proof}

\subsubsection{Swarm-colony development of Proteus mirabilis}

Finally, we mention another example that fits into the abstract framework of \eqref{1}-\eqref{4}. The model describes the swarming phenomenon of a bacterium called {\it Proteus mirabilis}. It models the evolution of mononuclear ``swimmers'' with density $v$ and multi-cellular ``swarmers'' with density $u$ and reads
\begin{align}
\partial_t u+\partial_a u&\,=\, \divv_x
\big(D(\bar{u}(t,x))\,\nabla_x u\big)\, -\, m(a)\, u\ , &
(t,a,x)\in
\R^+\times\R^+\times\Om\ ,\label{d1}\\
\partial_t v&\,=\, \frac{1}{\tau}\big(1-\xi(v)\big)v+\int_0^\infty e^{a/\tau}\, m(a)\, u(t,a,x)\ \rd a\ ,& (t,x)\in \R^+\times\Om\ ,\label{d2}\\
u(t,0,x)&\,=\, \frac{1}{\tau}\,\xi\big(v(t,x)\big)\,v(t,x) \ ,
&
(t,x)\in\R^+\times\Om\ ,\label{d3}\\
u(0,a,x)&\,=\,u^0(a,x)\ ,\quad v(0,x)\,=\, v^0(x)\ ,&  (a,x)\in
 \R^+\times\Om \ ,\label{d4}\\
\partial_\nu u(t,a,x)&\,=\,0\ ,&(t,a,x)\in
   \R^+\times \R^+\times \partial\Om\ ,\label{d5}\\
   \bar{u}(t,x)&\,=\, \int_0^\infty u(t,a,x)\, e^{a/\tau}\ \rd a\ ,& (t,x)\in \R^+\times \Om\ ,\label{d6}
\end{align}
for some $\tau>0$. Let $\g(a):=\h(a):=e^{a/\tau}$. If $\xi$ is sufficiently smooth, $\mu$ is bounded, and $2\alpha\in (1+n/p,2)$, then
$$
B:=[u\mapsto \tau^{-1}\xi(v_u)v_u]\in C_b^{1-}\big(C([0,T],\Wqbb^{2\alpha}),C^1([0,T],\Wqb^{2\alpha})\big)
$$
satisfies $(A_3)$, where $v_u$ is for a given $u$ the solution to \eqref{d2} with $v^0\in\Wqb^2$. Moreover
$$B[u]\in C^1([0,T],L_p)\cap C([0,T],\Wqb^2)\ 
$$
if $u\in L_1([0,T],\Wqbb^2)$. Hence, we obtain from Theorem \ref{T} and Propositions \ref{P1}-\ref{P3}:

\begin{prop}
Suppose \eqref{a4}, \eqref{a10}, and let $m$ be bounded. Further let $\xi\in C^3(\R)$ and \mbox{$p>n$}. If $v^0\in\Wqb^2$ and $u^0\in \Wqbb^2\cap C^1(\R^+,L_p)\cap C(\R^+,\Wqb^2)$ are non-negative and satisfy \mbox{$\xi(v^0)v^0=\tau u^0(0,\cdot)$}, then there exists a unique non-negative solution 
$$
v\in C^1(\R^+,\Wqb^{2\alpha})\cap C(\R^+,\Wqb^2)\ ,\quad u\in C(\R^+,\Wqbb^{2\alpha})\cap L_{\infty,loc}(\R^+,\Wqbb^{2})\ ,\qquad \alpha\in (0,1)\ .
$$
Moreover, $u$ satisfies \eqref{r1}, \eqref{r2} with $E_0=L_p$.
\end{prop}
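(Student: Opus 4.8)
The plan is to verify that the Proteus mirabilis system \eqref{d1}--\eqref{d6} falls within the scope of Theorem~\ref{T} (together with Propositions~\ref{P1}--\ref{P3}) by choosing the functional-analytic setting $E_0=L_p(\Om)$, $E_1=\Wqb^2$, $\g(a)=\h(a)=e^{a/\tau}$, and identifying the abstract operators $A$, $B$, and the mortality function $m$. We set $A[\bar{u}](t):=A_0(\bar{u}(t))$ with $A_0$ as in \eqref{a3} subject to Neumann conditions, $B[u](t):=\tau^{-1}\xi(v_u(t))\,v_u(t)$, where $v_u$ denotes the unique solution of the ODE \eqref{d2} with $v(0)=v^0\in\Wqb^2$ driven by the source term $\int_0^\infty e^{a/\tau}m(a)u(t,a)\,\rd a$, and $m_{\bar{u}}(t,a):=m(a)$, which is bounded by hypothesis. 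One first checks that the weight $\g(a)=e^{a/\tau}$ satisfies \eqref{a1111} (indeed $\g(a+b)=\g(a)\g(b)$, so $g_0=g_1=1$), and that $(A_1)$ holds trivially for $\h=\g$ since $\h/\g\equiv 1$ and $\h$ is independent of $t$.

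Next I would verify assumptions $(A_2)$--$(A_4)$. For $(A_4)$: since $m=m(a)$ is independent of $\bar{u}$, the Lipschitz estimate is trivial, and \eqref{mm} holds because $m\ge 0$ by \eqref{a10}. For $(A_2)$: the operator $A$ depends on $\bar{u}$ only locally in time through $A_0\in C_b^{1-}(\Wqb^2,\mathcal{H}(\Wqb^2,L_p))$, cf.\ \eqref{a7}; taking $2\alpha\in(1+n/p,2)$ and invoking the compact embedding \eqref{a5} together with $\Wqb^{2\alpha}\hookrightarrow C^1(\bar\Om)$, Corollary~\ref{C22} gives $(A_2)$. For $(A_3)$: I need the nonlinear substitution operator $u\mapsto v_u\mapsto \tau^{-1}\xi(v_u)v_u$ to map $C(I_T,\Wqbb^{2\alpha})$ Lipschitz-continuously on bounded sets into $C(I_T,E_\mu)$ for a suitable $\mu>0$. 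Solving the linear-in-$v$ Riccati-type ODE \eqref{d2} with forcing in $C(I_T,\Wqb^{2\alpha})$ (note $\int_0^\infty e^{a/\tau}m(a)u(t,a)\,\rd a\in\Wqb^{2\alpha}$ with norm controlled by $\|u(t)\|_{\Wqbb^{2\alpha}}$ since $m$ is bounded) yields, via Gronwall and standard ODE estimates in the Banach algebra $\Wqb^{2\alpha}\hookrightarrow C^1(\bar\Om)$, that $v_u\in C^1(I_T,\Wqb^{2\alpha})$ depends Lipschitz-continuously on $u$ on bounded sets; since $\xi\in C^3$, the composition $\tau^{-1}\xi(v_u)v_u$ inherits this with values in $\Wqb^{2\alpha}$, i.e.\ $(A_3)$ holds with $E_\mu=\Wqb^{2\alpha}$ (any $2\mu\le 2\alpha$ with $2\mu>n/p$, so $E_\mu\hookrightarrow C(\bar\Om)$). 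The causality condition in $(A_3)$ is clear since $v_u(t)$ depends only on $u|_{[0,t]}$. Local existence and the positivity statement then follow from Theorem~\ref{T} and Proposition~\ref{P2}, using \eqref{a77} (resolvent positivity of $A_0$) and $m\ge 0$; the hypothesis $\xi(v^0)v^0=\tau u^0(0,\cdot)$ is the compatibility condition making $u^0$ and $B$ match at $a=t=0$.

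For global existence I would apply Proposition~\ref{P3}: condition \eqref{i}--\eqref{ii} reduces, since $A$ is local in time, to showing $\bar{u}:J_T\to\Wqb^2$ is uniformly H\"older continuous and bounded; this follows by parabolic regularity for the quasi-linear equation $\partial_t\bar{u}-\nabla_x\cdot(D(\bar{u})\nabla_x\bar{u})=B[u]-\int_0^\infty m(a)u(t,a)\,e^{a/\tau}\rd a$ satisfied by $\bar{u}$ (cf.\ the argument in the proof of Proposition~\ref{E1}, using the maximum principle and \cite[Thm.4.2, Rem.4.3]{Amann93}), combined with the compact embedding $C^{1+\epsilon}(\bar\Om)\hookrightarrow C^1(\bar\Om)$ and Corollary~\ref{C222} — though here the a priori $L_\infty$-bound on $\bar{u}$ must come from the structure of the birth term $B$ rather than from boundedness of $b$, so one uses that $\|B[u](t)\|_{L_\infty}\le C(1+\|v_u(t)\|_{L_\infty})$ and the ODE \eqref{d2} with the damping term $\tau^{-1}(1-\xi(v))v$ to close the estimate. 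Condition \eqref{iii} is immediate once one knows $\|B[u](t)\|_{\Wqb^{2\alpha}}\le c(1+\max_{\tau\le t}\|u(\tau)\|_{\Wqbb^{2\alpha}})$, which the ODE analysis provides. Finally, the regularity $v\in C^1(\R^+,\Wqb^{2\alpha})\cap C(\R^+,\Wqb^2)$ follows from elliptic regularity for \eqref{d2}--\eqref{d3} once $\bar{u}\in C(\R^+,\Wqb^2)$ is known (the latter from \eqref{dd} of Theorem~\ref{T}), $u\in L_{\infty,loc}(\R^+,\Wqbb^2)$ from \eqref{668} with $\upsilon=1$, the pointwise equations \eqref{r1}--\eqref{r2} from Proposition~\ref{P1} (its hypotheses \eqref{P10}, \eqref{P11} being guaranteed by the assumptions $u^0\in\Wqbb^2\cap C^1(\R^+,L_p)\cap C(\R^+,\Wqb^2)$, $B[u]\in C^1(J,L_p)\cap C(J,\Wqb^2)$, and $m\in C^{1,0}$ since $m=m(a)$), and non-negativity of $v$ from non-negativity of $u$ and $v^0$ via \eqref{d2}. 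The main obstacle I expect is establishing $(A_3)$ and the a priori bound needed for \eqref{iii}: both require a careful analysis of the auxiliary ODE \eqref{d2} in the scale $\Wqb^{2\theta}$, in particular controlling $\|v_u\|$ in a space that embeds into $C^1(\bar\Om)$ and quantifying its Lipschitz dependence on $u$ — this is where the delicate estimates lie, whereas the verification of $(A_1)$, $(A_2)$, $(A_4)$ and the deduction of global existence from Proposition~\ref{P3} are routine given the results already available.
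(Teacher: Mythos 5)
Your proposal takes essentially the same route as the paper, which for this example simply fixes $\g=\h=e^{a/\tau}$, asserts that $B[u]=\tau^{-1}\xi(v_u)v_u$ satisfies $(A_3)$ together with the extra regularity $B[u]\in C^1([0,T],L_p)\cap C([0,T],\Wqb^2)$, and then invokes Theorem \ref{T} and Propositions \ref{P1}--\ref{P3}, deferring precisely the points you single out as delicate (the analysis of the auxiliary equation \eqref{d2} in the scale $\Wqb^{2\theta}$ and the a priori bounds needed for global existence) to \cite{LaurencotWalkerOpus1}. Two small caveats that do not change the strategy: because of the weight $e^{a/\tau}$, integrating \eqref{d1} against $\h$ produces an additional term $+\tau^{-1}\bar u$ in the equation for $\bar u$, and the $L_\infty$-closure rests on the cancellation of the $\xi$-terms between \eqref{d1} and \eqref{d2} (together with non-negativity) rather than on a bound of the form $\|B[u](t)\|_{L_\infty}\le C(1+\|v_u(t)\|_{L_\infty})$ with a ``damping'' interpretation of $\tau^{-1}(1-\xi(v))v$, which for a general $\xi\in C^3(\R)$ would not by itself prevent blow-up of the ODE.
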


For details we refer to \cite{LaurencotWalkerOpus1}, in particular also for the (more realistic) case of degenerate diffusion.


\end{document}